\def\bigO(#1){\ensuremath \mathcal{O}(#1)}
\newcommand{\N}{ {\mathbb N} }
\newcommand{\R}{ {\mathbb R} }
\newcommand{\C}{ {\mathbb C} }
\newcommand{\DD}{\mathcal{D}}
\newcommand{\LL}{\mathcal{L}}
\newcommand{\PP}{\mathcal{P}}
\newcommand{\MG}{\mathcal{M}} 
\newcommand{\EE}{ {\mathbb E} }
\newcommand{\MM}{ {\mathbb M} }
\newcommand{\Mb}{ {\bf M}}
\newcommand{\Zh}{\hat Z}
\def\ones{\mbox{\normalfont{1\hskip-0.24em l}}}
\def\diag{\mbox{diag}}
\newcommand{\parn}{\par\noindent}
\newcommand{\T}{^{\sf T}}
\newcommand{\gmm}{g}\newcommand{\Gmm}{G}
\newcommand{\nrad}[2]{r\big(#1,#2\big)}
\newenvironment{proof}{{\bf Proof.}}{\mbox{$\quad\square$}}
\newtheorem{theorem}{Theorem}[section] 
\newtheorem{lemma}[theorem]{Lemma}
\newtheorem{definition}[theorem]{Definition}
\newcommand{\example}{\refstepcounter{theorem}
 \par\noindent{\bf Example \thetheorem{} }}
\newtheorem{remark}[theorem]{Remark}
\numberwithin{equation}{section}
\begin{document}
\title{Algebraic criteria for A-stability of peer two-step methods}
\author{Bernhard A. Schmitt\thanks{Fachbereich Mathematik und Informatik, Philipps-Universit\"at, D-35032 Marburg, Germany
 ({\tt schmitt@mathematik.uni-marburg.de}).}}
\date{}
\maketitle
\begin{abstract}
A new criterion for A-stability of peer two-step methods is presented which is verifiable exactly in exact arithmetic by checking semi-definite\-ness of a certain test matrix.
It depends on the existence of two positive definite weight matrices for a given method.
Although the initial approach is different using properties of the numerical radius the criterion itself resembles the one from algebraic stability of General Linear Methods.
Known numerical algorithms for the computation of the unknown weight matrices suffer from rank deficiencies of the test matrix.
For $s$-stage peer methods of order $s-1$ this rank defect is identified with an explicit block diagonal decomposition of the test matrix in trivial and definite blocks.
In the design of methods its coefficients are unknown and an explicit parametrization of A-stable peer methods of order $s-1$ is presented with a weight matrix as parameter. 
This leads to a general existence result for any number of stages.
The restrictions for efficient L-stable peer methods like diagonally-implicit and parallel ones are also discussed and such methods with $3$ and $4$ stages are constructed.
\end{abstract}

{\bf Key words.}
Peer two-step methods, A-stability, numerical radius, general linear methods.

\section{Introduction}
A-stability requires that numerical solutions for the Dahlquist test equation
\begin{align}\label{testeq}
 y'(t)=\lambda y(t),\quad \lambda\in\C_-:=\{z\in\C:\, \Re\lambda<0\},
\end{align}
be uniformly bounded for $t\ge0$.
Although this concept seems rather limited addressing only the simple scalar equation it gains greater significance by the theorem of von Neumann \cite{vNeum51}, \cite[Th.IV.11.2]{HaWe96} which is applicable to A-stable one-step discretizations of linear systems $y'(t)=Ay(t)$ if $A$ has non-positive logarithmic norm, e.g. \cite{CGPM00}.
For Runge-Kutta methods the stability function $R:\,\C\mapsto\C$ is a scalar function.
Establishing A-stability here is possible through the maximum principle by considering the E-polynomial $|Q(i\eta)|^2-|P(i\eta)|^2,\eta\in\R$, of $R(z)=P(z)/Q(z)$ on the imaginary axis if $R$ has no poles in $\C_-$, \cite{HaWe96}.
This is an analytic criterion which may be difficult to check exactly for higher degrees.
However, Hairer, T\"urke and Scherer also derived a purely algebraic criterion for A-stability of Runge-Kutta methods depending on the semi-definiteness of a certain matrix, \cite{HaTu84}, \cite{SchTu89}.
It is a weaker version of the criterion for algebraic stability since the diagonal matrix of quadrature weights may be replaced by an unknown semi-definite weight matrix.
For $s$-stage peer two-step methods the stability function is a matrix function $\Mb:\,\C\mapsto\C^{s\times s}$.
In this case we define A-stability (i.e. A-acceptability of the stability matrix) by
\begin{align}\label{Aacc}
 \varrho\big(\Mb(z)\big)<1,\quad \lambda\in\C_-,
\end{align}
where $\varrho$ denotes the spectral radius.
A matrix valued version of the von-Neumann theorem is due to Nevanlinna, \cite{Nevan85}.
It uses assumptions on the spectral norm $\|\Mb(z)\|_2$ not on the spectral radius.
However, it will be shown that such bounds follow from the new algebraic criterion.
\par
Peer two-step methods were introduced in \cite{ScWe04} in a linearly-implicit version.
They are time stepping schemes using $s$ stages $Y_{mi}\in\R^n$, $i=1,\ldots,s$, on time intervals $[t_m,t_{m+1}]$ with general stepsizes $h_m=t_{m+1}-t_m$.
However, the discussion of A-stability is restricted to a fixed stepsize $h_m\equiv h$, here.
The stages $Y_{mi}$ are associated with the ODE solution at off-step points $t_{mi}=t_m+hc_i$ with fixed nodes $c_i$, $i=1,\ldots,s$.
For an autonomous problem $y'(t)=f\big(y(t)\big)$ with right-hand side $f:\,\R^n\to\R^n$ and function evaluations $F_{mi}=f(Y_{mi})$ the peer two-step method is given by
\begin{equation}\label{PeerV}
Y_{mi}-h_m\sum_{j=1}^i \gmm_{ij}F_{mj}
=\sum_{j=1}^s b_{ij}Y_{m-1,j}+h_m\sum_{j=1}^sa_{ij}F_{m-1,j},\ i=1,\ldots,s.
\end{equation}
The elements $a_{ij},b_{ij},\gmm_{ij}$, $i,j=1,\ldots,s$ are the coefficients of the method and will be assembled in square matrices $A=(a_{ij}),\,B=(b_{ij}),\,\Gmm:=(\gmm_{ij})\in\R^{s\times s}$.
For a constant stepsize these matrices are independent of the time step.
For general dimension $n\in\N$ stage vectors and function evaluations may be collected in matrices $Y_m\T=(Y_{m1},\ldots,Y_{ms})\in\R^{s\times n}$, $F_m\T=\big(f(Y_{mi})\big)_{i=1}^s$ yielding a compact representation of the method (\ref{PeerV}) as
\begin{align}\label{PeerS}
  Y_m-h_m\Gmm_m F_m=B_mY_{m-1}+h_mA_mF_{m-1}.
\end{align}
The structure of the matrix $\Gmm_m$ determines the numerical effort of the scheme.
In order to keep the effort of implicit methods moderate diagonally-implicit methods with a lower triangular matrix $\Gmm$ may be considered \cite{BWPS10} or parallel methods where $\Gmm$ is diagonal, \cite{ScWe04,SWE05,MPPW}.
For nonlinear ODEs \eqref{PeerS} is an implicit scheme.
The notion 'peer' refers to the fact that all stages $Y_{mi}$ have the same order.
Hence, a polynomial predictor is available to obtain a linearly-implicit method of full accuracy by performing only one Newton step, \cite{MPPW}.
Finally we note that peer methods are a special subclass of General Linear Methods (GLMs) where all internal stages are passed to the next interval.
\par
Verification of A-stability for GLMs is more difficult than for Runge-Kutta and multistep methods.
Floating point computations of eigenvalues are not fully reliable since $\varrho(\Mb(0))=1$ by preconsistency (see below).
The Schur criterion for polynomials may be used, see \cite{Jack09,BraJa14}, requiring global bounds on certain polynomials on $\C_-$.
Verification may also be difficult here for higher degrees.
In this paper a sufficient criterion is presented where only semi-definiteness of a certain $(2s)\times(2s)$-matrix $\MM$ (called a 'test matrix') has to be checked.
This test can be performed exactly e.g. by using rational arithmetic and a Cholesky-type decomposition if the matrix elements are rational.
The criterion is based on a reformulation of the eigenvalue problem for $\Mb(z)$ and uses semi-definiteness in an inner product described by an unknown symmetric positive definite matrix $Z$ (called a weight matrix).
Using well-known properties of the numerical radius of matrices an equivalent characterization by semi-definiteness of a certain block matrix $\MM$ depending on two unknown matrices $Z$ and $W$ is obtained.
This criterion, which is a feasibility problem of semidefinite programming (\cite{Vdboy96}), is related to a reformulation of algebraic stability presented in \cite{SchmAS12}.
As in the case of Runge-Kutta methods the restrictions on the unknown weight matrices $Z,W$ are weaker for A-stability.
The construction of such feasible matrices for given algebraically stable GLMs has been discussed by Hewitt and Hill \cite{HeHi09,Hill10} and by the author \cite{SchmAS12}.
The corresponding numerical methods were based on the solution of algebraic Riccati equations and certain symplectic eigenvalue problems.
However, computations were notoriously ill-conditioned due to singular Jacobians or higher multiplicity of eigenvalues.
The reason for these numerical difficulties is an inherent rank deficiency of the positive semidefinite test matrix $\MM$.
A rank defect one is well-known being due to the preconsistency condition, \cite{BurBut80}, \cite{HaWe96}.
This rank defect increases for higher order peer methods.
By an explicit transformation this rank deficiency is identified here exactly and an explicit block decomposition $\MM=0\oplus\MM_D$ is provided where the lower block $\MM_D$ can be positive definite.
Positive definiteness can be checked reliably by numerical computations.
We expect that this background may be beneficial also in the context of other General Linear Methods.
In this paper no numerical methods for finding A-stable methods are discussed, with one exception methods are constructed by formal computations with Maple.
\par
Much of the structure exploited in our analysis is due to the requirement of higher order.
The preconsistency condition for peer methods simply reads
\begin{align}\label{precon}
 B\ones=\ones,\quad \ones:=(1,\ldots,1)\T,
\end{align}
leading to one eigenvalue $1$ of $B$.
For $s$-stage peer methods order $s-1$ is easily obtained for arbitrary stepsize sequences by satisfying certain matrix equations.
For stiff problems so-called stiffly-accurate peer methods \eqref{PeerS} containing no explicit terms due to the choice $A=0$ have been successfully used in several papers, e.g. \cite{MPPW,BWPS10}.
For such methods and constant stepsize the conditions for order $s-1$ correspond to the choice (\cite{MPPW})
\begin{align}\label{ordsme}
  B=(I-\Gmm E)\Theta.
\end{align}
The matrices $E$ and $\Theta$ correspond to polynomial differentiation and extrapolation at the off-step points $t_{ki}=t_k+hc_i$. $i=1,\ldots,s$, $k=m,m+1$.
With the aid of the Vandermonde matrix $V=\big(c_i^{j-1}\big)\in\R^{s\times s}$, the shift matrix $F_0=\big(\delta_{i,j-1}\big)$ and the scaling matrix $D=\diag_i(i)$ they are given by $E=VDF_0\T V^{-1}$ and $\Theta=VPV^{-1}$.
The well-known relations $P=\exp(DF_0\T)$ \cite{MPPW} and
\begin{align}\label{expE}
  \Theta=e^E=\exp(E)
\end{align}
between both are discrete versions of the Taylor theorem and will play a crucial role later on.
Of course, the preconsistency condition \eqref{precon} follows from \eqref{ordsme} since $V^{-1}\ones=e_1$ is the first unit vector.
By choosing the nodes and the matrix $\Gmm$ a stiffly-accurate peer method of order $s-1$ is specified completely.
On general grids zero stability, i.e. stability of the scheme applied to the trivial ODE $y'=0$, may be a difficult requirement for peer methods, see \cite{MPPW}.
For constant stepsize it requires that the powers of $B$ are uniformly bounded. This leads to the condition $\varrho(B)\le1$ where eigenvalues on the unit circle are non-defective.
\par
Positive definiteness of symmetric matrices is denoted by $Z\succ0$ or 
$Z\in\DD^s:=\{X\in\R^{s\times s}:\,X\T=X\succ0\}$, semi-definiteness by $Z\succeq0$ or $Z\in\DD_0^s:=\{X\in\R^{s\times s}:\,X\T=X\succeq0\}$.
Only real matrices will be considered but vectors may be complex $x\in\C^s$, and $x^\ast=\bar x\T$ denotes their conjugate transpose.
In Section~\ref{Salgbed} we present the basic approach for establishing A-stability through a definiteness argument depending on an unknown weight matrix $Z\in\DD^s$.
It leads to a criterion for the generalized numerical radius of two matrices.
Using a result of Ando an equivalent characterization by semi-definiteness of a $(2s)\times(2s)$ test matrix $\MM$ is obtained which depends on a second matrix $W\in\DD_0^s$.
In Section~\ref{Sstac} we concentrate on methods with order $s-1$ and present a transformation revealing the rank deficiency of the test matrix explicitly.
A careful analysis of the requirements leads to an explicit relation between the unknown weight matrices $W$ and $Z$ and an existence result for A-stable peer methods for arbitrary $s\in\N$.
Also, the focus is moved from the existence of weight matrices to the existence of methods and a parametrization of A-stable methods is obtained where one of the weight matrices $W$ or $Z$ is a parameter.
The construction of practical methods in Sections~4 and 5 uses different techniques.
For diagonally-implicit methods triangular decompositions and a certain triangular canonical form of matrices are used.
The design of parallel methods with diagonal $G$ is based on a representation of $V^{-1}GV$ from \cite{SWE05}.
In both cases A-stable example methods are constructed for $s=3$ and $4$.
%
\section{A-stability and the numerical radius}\label{Salgbed}
Applying the peer method \eqref{PeerS} to the test equation \eqref{testeq} yields the recursion $Y_m=\Mb(z)Y_{m-1}$, where $\Mb(z)$ is the stability matrix of the peer method given by
\begin{align}\label{Stabmat}
 \Mb(z)=(I-z\Gmm)^{-1}(B+zA),\quad z\in \C.
\end{align}
Obviously, $\Mb(0)=B$ and stiffly accurate methods with nonsingular coefficient $\Gmm$ satisfy $\Mb(\infty)=0$ since $A=0$.
Hence, if such methods are A-stable they are also L-stable.
Due to the simple product form a symmetry between the eigenvalues $\lambda$ and the parameter $z$ may be used in the formulation which is well known from multistep methods.
Obviously, for $\det(I-z\Gmm)\not=0$ some $\lambda\in\C$ is an eigenvalue of $\Mb(z)$ iff
\begin{align}\label{zlambda}
 \det(B+zA+\lambda zG-\lambda I)=0.
\end{align}
This condition may be considered as an equation either for $\lambda$ or $z$ with solutions $\lambda(z)$ or $z(\lambda)$.
Hence, A-stability may be characterized in the following two equivalent ways where for each pair $(z,\lambda)$ satisfying \eqref{zlambda} holds
\begin{align}
\begin{array}{lll}
  \Re z<0&\Rightarrow& |\lambda|<1,\\
  |\lambda|\ge1&\Rightarrow& \Re z\ge0.
\end{array} 
\end{align}
Moving $\lambda$ along the unit circle and solving for $z$ gives the root-locus curves defining the stability regions of multistep methods, \cite[Sect.V.1]{HaWe96}.
A-stability means that these curves do not intersect the open left complex halfplane.
Considering $z$ as a solution of equation \eqref{zlambda} and $\lambda$ as a parameter leads to the generalized eigenvalue problem
\begin{align}\label{zewp}
 z(G+\frac1{\lambda}A)x =(I-\frac1{\lambda}B)x,\quad x\not=0,
\end{align}
with eigenvalue $z=z(\lambda)$.
For A-stable methods and $|\lambda|\ge1$ only solutions $z$ with non-negative real part are possible.
This property may be implied by a definiteness condition with respect to a Hermitian, positive definite matrix $Z$.
In the most general form of the following argument this weight matrix $Z$ might depend on $\lambda$, i.e. $Z=Z(\lambda)=Z(\lambda)^\ast$.
However, in this paper we consider only a constant, real weight matrix $Z\in\DD^s$.
Thus, using some nontrivial $x\in\C^s$, and $|\lambda|\ge1$, equation \eqref{zewp} is multiplied by $x^\ast\big(G+(1/\bar\lambda)A\big)\T Z$.
The real part of the result is
\begin{align}\notag
 2(\Re z)\|Z^{1/2}(G+\frac1{\lambda}A)x\|_2^2
  =&2\Re x^\ast\big(G+\frac1{\lambda}A\big)^\ast Z (I-\frac1{\lambda}B)x\\\label{Zdefnt}
  =& x^\ast\big(G\T Z+ZG-\frac1{|\lambda|^2}(A\T ZB+B\T ZA)\big)x\\\notag
   &+2\Re\frac1{\lambda} x^\ast\big(ZA-G\T ZB\big)x.
\end{align}
Hence, if the right-hand side of this equation is non-negative for any $\lambda$ with $|\lambda|\ge1$, $\Re z$ must be so, too.
In fact, this condition only depends on the absolute value $\mu=1/|\lambda|\le1$.
\begin{lemma}\label{Lnradb}
Let $Z\in\DD^s$ be such that $Q_\mu:=\frac12(G\T Z+ZG-\mu^2(A\T ZB+B\T ZA))\succ0$, $\mu\in[0,1]$.
Then, a sufficient condition for A-stability of the peer method \eqref{PeerS} is
\begin{align}\label{vanura}
\mu\frac{|x^\ast(ZA-G\T ZB)x|}{x^\ast Q_\mu x}\le 1\quad \forall x\in\C^s\setminus\{0\},\ \mu\in[0,1].
\end{align}
\end{lemma}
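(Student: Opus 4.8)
The plan is to establish A-stability through the equivalent eigenvalue characterization underlying \eqref{zlambda}--\eqref{zewp}: the peer method satisfies \eqref{Aacc} precisely when, for every pair $(z,\lambda)$ with $\det(I-zG)\not=0$, $\lambda$ an eigenvalue of $\Mb(z)$ and $|\lambda|\ge1$, one has $\Re z\ge0$. So I fix such a pair. Since $|\lambda|\ge1$ we have $\lambda\not=0$, so \eqref{zlambda} is equivalent to the generalized eigenvalue equation \eqref{zewp} for some eigenvector $x\in\C^s\setminus\{0\}$. Multiplying \eqref{zewp} from the left by $x^\ast\big(G+\tfrac1{\bar\lambda}A\big)\T Z$ and taking the real part is exactly the computation producing \eqref{Zdefnt}; writing $\mu:=1/|\lambda|\in(0,1]$, so that $1/|\lambda|^2=\mu^2$, that identity reads
\begin{equation*}
 2(\Re z)\,\big\|Z^{1/2}\big(G+\tfrac1\lambda A\big)x\big\|_2^2
  = 2\,x^\ast Q_\mu x + 2\,\Re\Big(\tfrac1\lambda\,x^\ast\big(ZA-G\T ZB\big)x\Big).
\end{equation*}

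The substantive step is to show that the right-hand side is non-negative. Since $|1/\lambda|=\mu$, the elementary bound $|2\Re w|\le2|w|$ gives $\big|\,2\Re\big(\tfrac1\lambda x^\ast(ZA-G\T ZB)x\big)\big|\le 2\mu\,|x^\ast(ZA-G\T ZB)x|$, and by the hypothesis $Q_\mu\succ0$ together with the assumed inequality \eqref{vanura}, applied with this $x$ and this $\mu\in[0,1]$, the last expression is bounded by $2\,x^\ast Q_\mu x$. Hence the right-hand side of the displayed identity is at least $2\,x^\ast Q_\mu x-2\,x^\ast Q_\mu x=0$, so $(\Re z)\,\big\|Z^{1/2}(G+\tfrac1\lambda A)x\big\|_2^2\ge0$. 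As $Z\succ0$, this scalar factor is strictly positive whenever $(G+\tfrac1\lambda A)x\not=0$, and dividing by it yields $\Re z\ge0$, which is what A-stability requires. Note that \eqref{vanura} carries $x^\ast Q_\mu x$ in the denominator, so it is the strict definiteness $Q_\mu\succ0$ throughout $[0,1]$, not mere semi-definiteness, that is invoked here.

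The one point I expect to need care is the tacit assumption $\big(G+\tfrac1\lambda A\big)x\not=0$, for otherwise the estimate above is vacuous. If this vector vanished, \eqref{zewp} would force $\big(I-\tfrac1\lambda B\big)x=0$ as well, so $x$ would be a common null vector of $A+\lambda G$ and $B-\lambda I$; such an $x$ would make $\lambda$, with $|\lambda|\ge1$, an eigenvalue of $\Mb(z)$ for \emph{every} $z$ with $\det(I-zG)\not=0$, which by itself renders the method non-A-stable and so must be excluded for the conclusion to hold. For the stiffly-accurate methods with nonsingular $G$ that are the main concern of this paper one has $A=0$, hence $\big(G+\tfrac1\lambda A\big)x=Gx\not=0$ for $x\not=0$, and the argument is complete; for a general method \eqref{PeerS} one needs the mild and standard regularity assumption that $A+\lambda G$ and $B-\lambda I$ share no kernel for $|\lambda|\ge1$. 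Apart from this degenerate branch, the core of the proof is just the short inequality chain of the second paragraph.
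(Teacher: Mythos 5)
Your argument is correct and follows essentially the same route as the paper: starting from the identity \eqref{Zdefnt}, bounding $2\Re\big(\tfrac1\lambda x^\ast(ZA-G\T ZB)x\big)$ from below by $-2\mu|x^\ast(ZA-G\T ZB)x|$ and invoking \eqref{vanura} with $Q_\mu\succ0$ to conclude $\Re z\ge0$ whenever $|\lambda|\ge1$. Your explicit treatment of the degenerate case $(G+\tfrac1\lambda A)x=0$ is a point the paper passes over tacitly, and your observation that it cannot occur for the stiffly-accurate case $A=0$ with nonsingular $G$ is a welcome addition rather than a deviation.
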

\begin{proof}
For $\alpha\in\C$ it holds that $\max\{\Re(\frac{\alpha}{\lambda}):\,|\lambda|=1/\mu\}=\mu|\alpha|$.
Hence, requiring non-negativity of the right-hand side of \eqref{Zdefnt} with fixed $x\not=0$ for all $|\lambda|=1/\mu$ means
\begin{align}\notag
 &\min_{\mu|\lambda|=1}
  \big(x^\ast\big(G\T Z+ZG-\frac1{|\lambda|^2}(A\T ZB+B\T ZA)\big)x \\\notag
  &\qquad +2\Re\frac1{\lambda} x^\ast\big(ZA-G\T ZB\big)x\big)\\\label{qfsdef}
   =&x^\ast\big(G\T Z+ZG-\mu^2(A\T ZB+B\T ZA)\big)x
   -2\mu|x^\ast\big(ZA-G\T ZB\big)x|\stackrel!\ge0.
\end{align}
The statement follows by considering arbitrary $x\not=0$ and $\mu\in[0,1]$ with $Q_\mu\succ0$:
\[ \max_{x\not=0}\frac{2\mu|x^\ast\big(ZA-G\T ZB\big)x|}{x^\ast\big(G\T Z+ZG-\mu^2(A\T ZB+B\T ZA)\big)x} \le1\ \forall \mu\in[0,1].
\]
\end{proof}
\parn
The inequality \eqref{vanura} is a condition on the numerical radius of a matrix pencil.
\begin{definition}
Let $U\in \R^{s\times s}$ and $N\in \DD^{s}$.
The (generalized) Rayleigh quotient of the matrix pencil $(U,N)$ is defined for $x\in\C^s\setminus\{0\}$ by $x^\ast Ux/x^\ast Nx$.
The {\em numerical range} (or {\em field of values}) of the pencil is the set of all Rayleigh quotients, $\{x^\ast Ux/x^\ast Nx:\, x\in\C^s\setminus\{0\}\}$.
The absolutely largest element in the numerical range is denoted as the numerical radius
\begin{align}\label{numrad}
  \nrad{U}{N}:=\max\big\{\big|\frac{x^\ast Ux}{x^\ast Nx}\big|:\ x\in\C^s\setminus\{0\}\big\}.
\end{align}
\end{definition}
\begin{remark}
a) The numerical range is a convex set containing all eigenvalues of the generalized eigenvalue problem $Ux=\lambda Nx$, \cite{HoJo85}.
\\{}
b) For the matrix pencil $U-\lambda I$ the matrix $N=I$ is omitted in the notation, $r(U):=\nrad{U}{I}$.
The general form is only a simple extension of $r(U)$ since any general $N\in\DD^s$ possesses a Cholesky decomposition $N=LL\T$ and it holds that $\nrad{U}{N}=r(L^{-1}U{L^{-T}})$ using the shorthand notation ${L^{-1}}\T=L^{-T}$.
The numerical radius $r(U)$ has some interesting properties.
It is a matrix norm bounded by the spectral norm, $\varrho(U)\le r(U)\le\|U\|_2$.
It is not sub-multiplicative but it still satisfies the power inequality $\frac12\|U^k\|_2\le r(U^k)\le r(U)^k\,\forall k\in\N$.
Hence, $r(U)\le1$ ($<1$) implies power boundedness (contractivity) of $U$, \cite{HoJo85}.
\end{remark}
\par
The dependence on the parameter $\mu$ in Lemma~\ref{Lnradb} makes the discussion difficult.
However, for stiffly accurate peer methods with coefficient $A=0$ it is obvious that $\mu=1$ is the critical value in \eqref{vanura}.
With the exception of the first paper on peer methods \cite{ScWe04} only such methods have been considered in the literature for the solution of stiff problems due to their superior damping of stiff solution components.
For $A=0$ the condition \eqref{vanura} for A-stability can be re-stated with the numerical radius as
\begin{align}\label{vanurd}
 r(G\T ZB,Q)\le1,\quad Q:=\frac12(G\T Z+ZG)\succ0.
\end{align}
If this condition is satisfied it will be with equality for any practical peer method due to the preconsistency condition \eqref{precon}.
In the Rayleigh quotient the preconsistency vector $\ones$ gives the nontrivial denominator $\ones\T Q\ones=\ones\T ZG\ones>0$ and the  nominator $|\ones\T G\T ZB \ones|=|\ones\T ZG\ones|$ is the same.
This critical vector $\ones$ leads to tight inequalities in the new criterion for A-stability similar to those in algebraic stability \cite{HaWe96}.
\par
The condition \eqref{vanurd} is not very practical yet since the computation of the numerical radius is difficult.
However, there exists a purely algebraic criterion due to Ando, see also \cite{Math93}.
\begin{lemma}[\cite{Ando73}]\label{LAndo}
For any matrix $U\in\R^{s\times s}$ the condition $r(U)\le1$ is equivalent with the existence of a symmetric matrix $W\in\R^{s\times s}$ such that
\begin{align}\label{RAndo}
 \begin{pmatrix} 2I-W&U\\ U\T&W \end{pmatrix}\succeq 0.
\end{align}
\end{lemma}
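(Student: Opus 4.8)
The plan is to prove the two implications separately: the direction ``LMI $\Rightarrow r(U)\le1$'' is elementary via a test-vector argument, while ``$r(U)\le1\Rightarrow$ LMI'' is the substantive part and amounts to a spectral factorization.

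First I would do the easy direction. Assuming the block matrix in \eqref{RAndo} is positive semidefinite, test it against vectors $v=\binom{x}{\zeta x}$ with $x\in\C^s\setminus\{0\}$ and $|\zeta|=1$. Using that $U$ is real, so $x^\ast U\T x=\overline{x^\ast Ux}$, the quadratic form collapses to $v^\ast(\cdot)v=2x^\ast x+2\Re(\zeta\,x^\ast Ux)$; notice the $W$-contributions cancel entirely, which is exactly why the LMI leaves $W$ unconstrained on this side. Minimizing over $\zeta$ gives $x^\ast x-|x^\ast Ux|\ge0$, i.e. $r(U)\le1$. I would present this first precisely because it is self-contained.

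For the converse I would start from the standard reformulation $r(U)\le1\iff\Re(e^{i\theta}U)\preceq I$ for all $\theta\in\R$, i.e. the matrix-valued trigonometric polynomial $g(\theta):=I-\tfrac12\big(e^{i\theta}U+e^{-i\theta}U\T\big)$ is positive semidefinite for every $\theta$. Since $g$ has degree one in $e^{i\theta}$, the matrix Fej\'er--Riesz (spectral factorization) theorem yields $H_0,H_1$ with $g(\theta)=(H_0+e^{i\theta}H_1)^\ast(H_0+e^{i\theta}H_1)$; matching Fourier coefficients gives $H_0^\ast H_0+H_1^\ast H_1=I$ and $H_0^\ast H_1=-\tfrac12U$. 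Setting $W:=2H_1^\ast H_1$ one gets $2I-W=2H_0^\ast H_0$ and hence
\[
  \begin{pmatrix}2I-W&U\\ U\T&W\end{pmatrix}
   =2\begin{pmatrix}H_0^\ast\\[2pt]-H_1^\ast\end{pmatrix}\begin{pmatrix}H_0&-H_1\end{pmatrix}\succeq0,
\]
which is \eqref{RAndo}. Reality of $W$ is recovered at the end: conjugating this inequality entrywise and using that $U$ is real shows $\overline W$ is also feasible, so by convexity of the feasible set $\tfrac12(W+\overline W)$ is a \emph{real} symmetric feasible matrix.

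The main obstacle is the existence step hidden in the converse: constructing $W$ is equivalent to factoring $g$, and after eliminating $H_0=P^{1/2}$ with $P=I-\tfrac12W$ this is the same as solving the algebraic matrix equation $U\T P^{-1}U+4P=4I$ with $0\prec P\preceq I$ --- exactly the Riccati-type object whose ill-conditioning is flagged in the introduction. I would either invoke the matrix Fej\'er--Riesz theorem as a black box for this, or, since the result is classical, simply attribute the construction of $W$ to Ando \cite{Ando73} (see also \cite{Math93}) and give the full argument only for the elementary ``LMI $\Rightarrow r(U)\le1$'' direction.
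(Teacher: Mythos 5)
Your proposal is correct, but the paper does not actually prove this lemma: it is imported verbatim (up to the substitution $W'=I-W$) from Ando's paper, so the comparison is between your argument and a bare citation. Your easy direction (testing \eqref{RAndo} against $\binom{x}{\zeta x}$, $|\zeta|=1$, so that the $W$-terms cancel and minimization over $\zeta$ yields $x^\ast x\ge|x^\ast Ux|$) is sound and is in fact the very computation the paper carries out later inside the proof of Theorem~\ref{TAAStb}, so that half fits the paper's style exactly. The converse is where Ando's contribution lies, and your route through the reformulation $r(U)\le1\iff \Re(e^{i\theta}U)\preceq I$ and the degree-one matrix Fej\'er--Riesz factorization $g(\theta)=(H_0+e^{i\theta}H_1)^\ast(H_0+e^{i\theta}H_1)$ is a legitimate, genuinely different derivation: the coefficient matching $H_0^\ast H_0+H_1^\ast H_1=I$, $H_0^\ast H_1=-\tfrac12U$ does produce \eqref{RAndo} as a Gram matrix, and your final symmetrization $\tfrac12(W+\overline W)$ correctly repairs the possible complexity of $W=2H_1^\ast H_1$ (conjugating a PSD Hermitian matrix preserves PSD, $U$ is real, and the feasible set is convex). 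What this buys over the citation is a self-contained proof modulo one standard black box, plus the observation --- consistent with the paper's later discussion --- that constructing $W$ is equivalent to a Riccati-type equation $U\T P^{-1}U+4P\preceq 4I$; what it costs is the appeal to matrix Fej\'er--Riesz, which is no more elementary than Ando's original result. Either keeping the citation for the hard direction or including your factorization argument would be acceptable.
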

The statement differs slightly from the original one which used a matrix $W'=I-W$.
The version \eqref{RAndo} is more appropriate here and reveals that $W$ itself is semi-definite, $W\in\DD_0^s$.
We also note that semi-definiteness in \eqref{RAndo} with the choice $W=I$ corresponds to the stronger condition $\|U\|_2\le1$.
Lemma~\ref{LAndo} yields the following purely algebraic condition for A-stability of peer methods with $A=0$.
Actually, it uses slightly weaker assumptions than Lemma~\ref{Lnradb}.
\begin{theorem}\label{TAAStb}
If there exist matrices $Z\in\DD^s$ and $W\in\DD_0^s$ such that 
\begin{align}\label{MDef}
 \MM:=\begin{pmatrix}
  G\T Z+ZG-W&-G\T ZB\\
  -B\T ZG&W
 \end{pmatrix}\succeq0,
\end{align}
then, the stiffly accurate peer method \eqref{PeerV} with $A=0$ is A-stable.
\end{theorem}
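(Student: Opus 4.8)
The plan is to carry out the definiteness argument of Section~\ref{Salgbed} in the special case $A=0$ and to isolate the one scalar inequality that the test matrix $\MM$ is built to encode. With $A=0$ the identity \eqref{Zdefnt} says that any $\lambda\ne0$ and any $z$ and $x\ne0$ solving the generalized eigenvalue problem \eqref{zewp} satisfy
\begin{equation}\label{planid}
 2(\Re z)\,\|Z^{1/2}Gx\|_2^2 \;=\; x^\ast(G\T Z+ZG)x \;-\; 2\Re\Bigl(\tfrac1\lambda\,x^\ast G\T ZB\,x\Bigr).
\end{equation}
For $|\lambda|\ge1$ one has $\bigl|\Re(\tfrac1\lambda x^\ast G\T ZB x)\bigr|\le|x^\ast G\T ZB x|$, so the right-hand side of \eqref{planid} is at least $x^\ast(G\T Z+ZG)x-2|x^\ast G\T ZB x|$. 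Hence everything reduces to the pointwise bound
\begin{equation}\label{plancrux}
 x^\ast(G\T Z+ZG)x\;\ge\;2\,\bigl|x^\ast G\T ZB\,x\bigr|\qquad\text{for all }x\in\C^s .
\end{equation}

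First I would derive \eqref{plancrux} from $\MM\succeq0$. Evaluate the quadratic form $v^\ast\MM v$ on $v=\binom{x}{y}$ with $y=e^{i\phi}x$, choosing $\phi\in\R$ so that $e^{i\phi}x^\ast G\T ZB\,x=|x^\ast G\T ZB\,x|$. Because $W$ is real symmetric, $y^\ast Wy=x^\ast Wx$, so the $W$ coming from the lower diagonal block cancels the $-W$ in the upper one; the two off-diagonal blocks together contribute $-2\Re(e^{i\phi}x^\ast G\T ZB\,x)=-2|x^\ast G\T ZB\,x|$, and $0\le v^\ast\MM v$ becomes exactly \eqref{plancrux}. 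This cancellation is also why the hypothesis of Theorem~\ref{TAAStb} is genuinely weaker than that of Lemma~\ref{Lnradb}: the argument never divides by $x^\ast Q x$, so $Q:=\tfrac12(G\T Z+ZG)$ need only be positive semidefinite — which is automatic, since $G\T Z+ZG-W\succeq0$ is a diagonal block of $\MM$ and $W\succeq0$ — and not positive definite.

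Next I would close the loop. Fix $z$ with $\Re z<0$ and let $(z,\lambda,x)$ solve \eqref{zewp}; I claim $|\lambda|<1$. With $G$ nonsingular (the relevant case, and in any case automatic once $Q\succ0$), $\|Z^{1/2}Gx\|_2^2=(Gx)^\ast Z(Gx)>0$, so the left side of \eqref{planid} is strictly negative and therefore $x^\ast(G\T Z+ZG)x<\tfrac2{|\lambda|}\,|x^\ast G\T ZB\,x|$. If $x^\ast G\T ZB\,x\ne0$, cancelling it and using \eqref{plancrux} yields $2\le\tfrac2{|\lambda|}$, i.e.\ $|\lambda|<1$; if $x^\ast G\T ZB\,x=0$, the inequality reads $x^\ast(G\T Z+ZG)x<0$, contradicting $x^\ast(G\T Z+ZG)x\ge x^\ast Wx\ge0$. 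Either way $|\lambda|<1$, so every eigenvalue of $\Mb(z)=(I-zG)^{-1}B$ lies in the open unit disk, which is precisely A-stability \eqref{Aacc}.

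For context I would also record why $\MM$ takes this particular shape: when $Q\succ0$, writing $Q=LL\T$ and $U:=L^{-1}G\T ZB\,L^{-T}$, the numerical-radius condition \eqref{vanurd} becomes $r(U)=r(-U)\le1$ by the reduction noted after \eqref{numrad}; Ando's Lemma~\ref{LAndo} applied to $-U$ produces a symmetric $W'\succeq0$ with $\bigl(\begin{smallmatrix}2I-W'&-U\\ -U\T&W'\end{smallmatrix}\bigr)\succeq0$, and a congruence by $\diag(L,L)$ together with $W:=LW'L\T$ converts this into $\MM\succeq0$ (and conversely $W'=L^{-1}WL^{-T}\succeq0$). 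The main obstacle is not this algebra but the interface between the non-strict bound \eqref{plancrux} (equivalently $r(\cdot)\le1$) and the strict requirement $\varrho(\Mb(z))<1$, together with the degenerate possibility $Z^{1/2}Gx=0$; the case analysis of the previous paragraph settles it as long as $G$ is invertible, and if one wishes to allow singular $G$ one must additionally impose zero-stability $\varrho(B)\le1$ to handle the vectors of $\ker G$, which are eigenvectors of $B$ for every $z$.
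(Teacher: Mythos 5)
Your proof is correct and takes essentially the same route as the paper: the paper likewise evaluates the quadratic form of $\MM$ on vectors $(x\T,\alpha x\T)\T$ with $|\alpha|=1$ chosen to align the phase of $x^\ast G\T ZBx$, obtains the scalar inequality $x^\ast(G\T Z+ZG)x\ge 2|x^\ast G\T ZBx|$ (i.e.\ \eqref{qfsdef} with $\mu=1$, $A=0$), and feeds it back into the identity \eqref{Zdefnt}. Your explicit handling of the degenerate cases ($x^\ast G\T ZBx=0$, and $Gx=0$ for singular $G$) is more careful than the paper's one-line conclusion, but it does not change the argument.
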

\begin{proof}
Since the matrix $Q$  may not be definite, Lemma~\ref{LAndo} cannot be used directly.
However, considering the quadratic form of the semidefinite matrix $\MM$ with vectors of the form $(x\T,\alpha x\T)\T$, $x\in\C^s$, $\alpha\in\C$, gives
\begin{align*}
 0\le& (x^\ast,\bar\alpha x^\ast)\MM\begin{pmatrix}x\\\alpha x \end{pmatrix}\\
 =&2x^\ast Qx-2\Re\big(\alpha x^\ast G\T ZB x\big)+(|\alpha|^2-1) x^\ast W x
\end{align*}
Choosing $\alpha$ such that $|\alpha|=1$ and $\Re(\alpha x^\ast G\T ZBx)=|x^\ast G\T ZBx|$ the condition is identical with \eqref{qfsdef} showing A-stability.
\end{proof}
\begin{remark}
For the unknown matrices $Z$ and $W$ the inequality \eqref{MDef} is a linear feasibility problem of semi-definite programming, see \cite{Vdboy96}.
For any feasible solution $(Z,W)$ also all positive multiples are solutions and some normalization may be appropriate.
\end{remark}
With known matrices $Z,W$ verification of A-stability by Theorem~\ref{TAAStb} requires only the computation of a Cholesky-type decomposition of $\MM$.
Hence, for rational or algebraic entries exact verification is possible.
This is an advantage over other criteria from the literature like the Schur criterion \cite{Jack09} requiring that certain rational functions be positive or bounded by one on the whole complex halfplane.
Exact verification may be crucial since the test matrix $\MM$ is rank deficient and semi-definiteness may not be decidable with floating point arithmetic.
\begin{lemma}\label{LZWeins}
Under the assumption \eqref{precon}, $B\ones=\ones$, the matrix $\MM$ is rank deficient.
\end{lemma}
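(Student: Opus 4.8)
The plan is to exhibit an explicit nonzero vector in the kernel of $\MM$, built from the preconsistency vector $\ones$. Since $B\ones=\ones$, the block $-G\T ZB$ applied to $\ones$ gives $-G\T Z\ones$, which is exactly what the first block row produces from $\ones$ up to the unknown term $W\ones$. This suggests testing vectors of the form $v=(\ones\T,\beta\ones\T)\T$ for a suitable scalar $\beta$. First I would compute $\MM v$ blockwise: the top block yields $(G\T Z+ZG-W)\ones-\beta G\T Z\ones$ and the bottom block yields $-B\T ZG\ones+\beta W\ones$. Using $B\ones=\ones$ only helps directly in the off-diagonal products; for the bottom-left block I need $B\T ZG\ones$, which is not obviously simplified by $B\ones=\ones$, so instead I would test whether a one-dimensional kernel already shows up at the level of the quadratic form rather than insisting on a clean eigenvector identity.

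Concretely, the cleaner route is via the quadratic form. Recall from the proof of Theorem~\ref{TAAStb} that for $v=(x\T,\alpha x\T)\T$ one has
\[
 v^\ast\MM v = 2x^\ast Qx - 2\Re\big(\alpha x^\ast G\T ZBx\big) + (|\alpha|^2-1)x^\ast Wx,
\]
with $Q=\frac12(G\T Z+ZG)$. Setting $x=\ones$ and using $B\ones=\ones$, the middle term becomes $-2\Re(\alpha\,\ones\T G\T Z\ones)=-2\Re(\alpha)\,\ones\T ZG\ones$ (the scalar $\ones\T G\T Z\ones=\ones\T ZG\ones$ is real), and $2\ones\T Q\ones=2\,\ones\T ZG\ones$. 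Writing $\kappa:=\ones\T ZG\ones>0$ and $\omega:=\ones\T W\ones\ge0$, this gives $v^\ast\MM v=2\kappa(1-\Re\alpha)+(|\alpha|^2-1)\omega$. Taking $\alpha=1$ kills both terms, so the real vector $v_0=(\ones\T,\ones\T)\T\neq0$ satisfies $v_0\T\MM v_0=0$. Since $\MM\succeq0$ is assumed (or, if the lemma is meant unconditionally, one argues within the feasible set where it is), a semidefinite matrix with a null vector in its quadratic form must annihilate that vector: $\MM v_0=0$. Hence $\MM$ is singular, i.e. rank deficient.

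The main obstacle is a matter of scope rather than difficulty: Lemma~\ref{LZWeins} is stated without the hypothesis $\MM\succeq0$, so the argument "$v_0\T\MM v_0=0$ and $\MM\succeq0$ imply $\MM v_0=0$" only applies to feasible pairs $(Z,W)$. I would therefore phrase the conclusion as: whenever $\MM\succeq0$ (the only case of interest, since otherwise the method is not certified A-stable by Theorem~\ref{TAAStb}), the vector $v_0=(\ones\T,\ones\T)\T$ lies in $\ker\MM$, so $\MM$ has rank at most $2s-1$. If one instead wants singularity for every $(Z,W)$ regardless of definiteness, I would check directly that $\MM v_0=0$ as an algebraic identity: the top block gives $(G\T Z+ZG-W)\ones-G\T ZB\ones=(G\T Z+ZG-W-G\T Z)\ones=(ZG-W)\ones$ and the bottom block gives $(-B\T ZG+W)\ones=-B\T ZG\ones+W\ones$; these vanish iff $W\ones=ZG\ones$ and $B\T ZG\ones=W\ones$, i.e. $B\T ZG\ones=ZG\ones$ — meaning $ZG\ones$ is a left-eigenvector-type relation that need not hold in general. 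So the honest statement is the conditional one, and the quadratic-form argument above is the right proof; I would present exactly that, noting that the rank defect is attached to the preconsistency vector and anticipating the refined block decomposition of the next section.
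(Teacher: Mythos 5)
Your proof is correct and coincides with the paper's: the paper likewise observes that the quadratic form of $\MM$ with the vector $(\ones\T,\ones\T)\T$ vanishes by preconsistency, and concludes (implicitly using $\MM\succeq0$) that this vector lies in the kernel. Your careful remark about the scope — that the conclusion $\MM v_0=0$ genuinely requires semi-definiteness, which is the only case of interest — is a fair reading of what the paper leaves tacit.
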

\begin{proof}
$\MM$ has a nontrivial kernel, its quadratic form with $(\ones\T,\ones\T)\T$ vanishes.
\end{proof}
\parn
We note without proof that similar to \cite{SchmAS12}, \cite[Sect.I.9]{HaWe96} the following necessary conditions on $W$ and $Z$ may be derived, 
\begin{align*}
 (B\T-I)ZG\ones=0,\quad
  W\ones=B\T ZG\ones.
\end{align*}
Although the initial approach \eqref{qfsdef} leading to the A-stability criterion was independent from the concept of algebraic stability the final criterion from Theorem~\ref{TAAStb} obtained for a constant weight matrix $Z$ is strongly related.
By using Sylvester's law of inertia and special congruence transformations it was shown in \cite{SchmAS12} that algebraic stability of stiffly accurate peer methods is essentially equivalent with the existence of a diagonal matrix $D$ and a weight Matrix $W\in\DD^s$ such that
\begin{align}\label{algstab}
\begin{pmatrix}
 G\T D+DG-G\T WG&G\T DG^{-1}B\\
 B\T G^{-T}DG &W
\end{pmatrix}\succeq0.
\end{align}
Assuming a nonsingular coefficient $G$ and introducing the matrix $Z:=G^{-T}DG^{-1}$ this block matrix is congruent with $\MM$ from \eqref{MDef}.
Hence, the main difference between algebraic stability and the sufficient criterion for A-stability is that the weight matrix $Z$ is restricted by the requirement that $G\T ZG=D$ is diagonal.
In algebraic stability also the conditions on $W$ and $Z$ have changed slightly, $W\in\DD^s$ and $D\in\DD_0^s$.
Hence, the results from the next sections may also be of interest in this larger context. 
In fact, by combining the congruence transformation from \cite{SchmAS12} with the original proof from \cite{BurBut80} the following norm bound for the stability matrix $\Mb(z)$ \eqref{Stabmat} of stiffly-accurate methods is obtained.
It verifies the assumption of the Nevanlinna theorem \cite{Nevan85}.
\begin{lemma}\label{LNrmM}
If there exist $W,Z\in\DD^s$ such that $\MM\succ0$ in \eqref{MDef}, then for the stability matrix $\Mb(z)=(I-zG)^{-1}B$ it holds that
\[ \|W^{1/2}\Mb(z)W^{-1/2}\|_2\le1\ \forall z\in\C_-.\]
\end{lemma}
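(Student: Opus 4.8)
The plan is to turn the assertion into a purely algebraic scalar inequality and then read it off from the positive semidefiniteness of the test matrix $\MM$, reproducing the quadratic-form computation of Burrage and Butcher \cite{BurBut80}. Fix $z\in\C_-$ and $Y\in\C^s$, and write $\hat Y=\Mb(z)Y$, so that $(I-zG)\hat Y=BY$. Substituting $Y=W^{-1/2}v$ shows that $\|W^{1/2}\Mb(z)W^{-1/2}\|_2\le1$ is equivalent to $\hat Y^\ast W\hat Y\le Y^\ast WY$ for all such pairs $(Y,\hat Y)$, so it suffices to prove this latter inequality.

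Before that I would check that $\Mb(z)$ is in fact defined on all of $\C_-$. The leading block of the positive definite matrix $\MM$ is $G\T Z+ZG-W$, and since $W\succ0$ this yields $G\T Z+ZG\succ0$. If $(I-zG)x=0$ for some $x\ne0$, then $Gx=x/z$ (note $z\ne0$ on $\C_-$), hence $x^\ast(G\T Z+ZG)x=2\Re(1/z)\,x^\ast Zx<0$ because $\Re(1/z)=(\Re z)/|z|^2<0$ and $Z\succ0$, contradicting $G\T Z+ZG\succ0$. Thus $I-zG$ is nonsingular and $\hat Y$ is well defined.

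The core step is to evaluate the quadratic form of $\MM$ on the stacked vector $(\hat Y\T,Y\T)\T$. Substituting $BY=(I-zG)\hat Y$ into the off-diagonal blocks and using that $2\Re(\hat Y^\ast G\T Z\hat Y)=\hat Y^\ast(G\T Z+ZG)\hat Y$ because $Z$ and $G$ are real, the terms $\pm\hat Y^\ast(G\T Z+ZG)\hat Y$ cancel and one is left with
\[
 0\le(\hat Y^\ast,Y^\ast)\,\MM\begin{pmatrix}\hat Y\\ Y\end{pmatrix}=Y^\ast WY-\hat Y^\ast W\hat Y+2(\Re z)\,(G\hat Y)^\ast Z(G\hat Y).
\]
Since $(G\hat Y)^\ast Z(G\hat Y)\ge0$ by $Z\succ0$ and $\Re z<0$ on $\C_-$, the last summand is $\le0$, hence $\hat Y^\ast W\hat Y\le Y^\ast WY$, which is the claim.

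I do not anticipate a serious obstacle: this is essentially the Burrage--Butcher contractivity argument transplanted to $\MM$, which is the sense in which it combines that proof with the congruence linking $\MM$ to the algebraic-stability block matrix \eqref{algstab}. The only points that need care are the nonsingularity of $I-zG$ on $\C_-$, so that $\Mb(z)$ and the weighted norm are meaningful, and the bookkeeping of complex conjugate pairs in the quadratic form — in particular using reality of $Z$ to replace $2\Re(\hat Y^\ast G\T Z\hat Y)$ by $\hat Y^\ast(G\T Z+ZG)\hat Y$; one also notices along the way that $\MM\succeq0$ together with $W\succ0$ already suffices in place of the stated $\MM\succ0$.
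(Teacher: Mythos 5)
Your proof is correct and follows essentially the same route as the paper: evaluate the quadratic form of $\MM$ on the stacked vector $(\hat Y,Y)$ with $\hat Y=\Mb(z)Y$, use the step relation $(I-zG)\hat Y=BY$ to collapse the cross terms into $2\Re(z)\,\hat Y^\ast G\T ZG\hat Y\le0$, and conclude $\hat Y^\ast W\hat Y\le Y^\ast WY$. Your added verification that $I-zG$ is nonsingular on $\C_-$ (via $G\T Z+ZG\succ0$) and the observation that $\MM\succeq0$ with $W\succ0$ suffices are correct refinements that the paper leaves implicit.
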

\begin{proof}
With the variables from the time step $Y_m-BY_{m-1}=z\Gmm Y_m\in\C^s$ applied to \eqref{testeq} the quadratic form of the real matrix $\MM$ is nonnegative,
\begin{align*}
 0\le&(Y_m^\ast,Y_{m-1}^\ast)
 \begin{pmatrix} G\T Z+ZG-W&-G\T ZB\\ -B\T ZG &W \end{pmatrix}
 \begin{pmatrix}Y_m\\ Y_{m-1}\end{pmatrix}\\
 =&Y_{m-1}^\ast WY_{m-1}-Y_{m}^\ast WY_{m}+2\Re\big(Y_m^\ast G\T Z(Y_m-BY_{m-1})\big)\\
 =&Y_{m-1}^\ast WY_{m-1}-Y_{m}^\ast WY_{m}+2\Re(z)Y_m^\ast G\T Z\Gmm Y_m.
\end{align*}
Since $\Re z<0$ and $Y_m^\ast G\T Z\Gmm Y_m\ge0$ the assertion follows by $\|W^{1/2}Y_m\|^2=Y_{m}^\ast WY_{m}\le Y_{m-1}^\ast WY_{m-1}=\|W^{1/2}Y_{m-1}\|_2^2$.
\end{proof}
\par
The problem of finding suitable weight matrices $D,W$ for \eqref{algstab} was discussed by Hewitt and Hill \cite{HeHi09} and later by the author \cite{SchmAS12} by considering algebraic Riccati equations.
Hill \cite{Hill10} also used a relation with certain symplectic eigenvalue problems.
However, both approaches suffer from ill-conditioning caused by preconsistency, see Lemma~\ref{LZWeins}, since the Riccati equation has a singular Jacobian in its solutions and the symplectic eigenvalue problem has critical eigenvalues with higher multiplicity.
Hence, numerical iterations may stall or suffer from severe loss of accuracy (\cite{HeHi09}). 
\par
In this paper, we will use a different approach which explicitly reveals the rank defect and also changes the focus of the discussion.
The algebraic criterion will be considered as a tool in the construction of A-stable peer methods.
In this context, not the weight matrices $W,Z$ of a given method are unknown in the first place but the whole method with coefficients $B,G$ and the weights $W,Z$ at the same time.
Hence, the following discussion treats all these matrices as unknowns.
However, additional requirements are introduced in order to narrow the search to methods of practical interest.

\section{Stiffly accurate peer methods of order $s-1$}\label{Sstac}
For methods of higher order the coefficient matrices $B$ and $G$ are coupled.
The relation for order $s-1$  is given by \eqref{ordsme}, $B=\Theta-GE\Theta=(I-GE)\Theta$, where $\Theta$ depends on the off-step nodes $(c_i)$ only.
Thus, the A-stability condition \eqref{MDef} changes for methods of order $s-1$ to
\begin{align}\label{MDefH}
 \MM=\begin{pmatrix}
  G\T Z+ZG-W&-G\T Z(I-GE)\Theta\\
  -\Theta\T(I-GE)\T ZG\T &W
 \end{pmatrix}\succeq0.
\end{align}
This condition is discussed now as a nonlinear feasibility problem for the triple $(G,W,Z)$ of unknown matrices without focusing attention on any particular unknown.
The implicit dependence on the nodes through the Vandermonde matrix $V$ is  considered later.
\subsection{A rank revealing congruence transformation}
Positive semi-definiteness of the block matrix in \eqref{MDefH} requires semi-definiteness of both diagonal blocks, e.g. $W\succeq0$ in the second.
Not much information can be gained for the first diagonal block since it has a nonlinear dependence on all three unknowns $G,W,Z$.
Now, an additional congruence transformation on \eqref{MDefH} is introduced and it is also convenient to use a transformed weight matrix $\hat W:=\Theta^{-T}W\Theta^{-1}$:
\begin{align}\notag
 \hat\MM:=&\begin{pmatrix} I&\Theta^{-T}\\ 0&\Theta^{-T} \end{pmatrix}
  \begin{pmatrix} G\T Z+ZG-W&-G\T Z(I-GE)\Theta\\
  -\Theta\T(I-GE)\T ZG\T &W \end{pmatrix}
  \begin{pmatrix} I&0\\ \Theta^{-1}&\Theta^{-1} \end{pmatrix}
\\\label{MThtr}
 =&\begin{pmatrix}
   (G\T ZG)E+E\T(G\T ZG)+\hat W-\Theta\T\hat W\Theta
   &\hat W-G\T Z+(G\T ZG)E\\
   \hat W-ZG+E\T(G\T ZG)&\hat W \end{pmatrix}.
\end{align}
Before proceeding we note that there is the interesting class of FSAL methods ({\em first same as last}) \cite{SWB12} with a singular coefficient $\Gmm$.
Hence, nonsingularity of $\Gmm$ will be assumed only later on starting with subsection~\ref{SGreg}.
\par
The transformed matrix \eqref{MThtr} is better suited for the analysis than \eqref{MDefH} since the first diagonal block depends linearly on the two matrices $\hat W$ and $\Zh:=G\T ZG$, but not on $G$ alone.
Furthermore, it allows to identify the rank defect of $\MM$ accurately  since the differentiation matrix $E$ is nilpotent and also one is an $s$-fold eigenvalue of the extrapolation matrix $\Theta$.
The two linear maps associated with the first diagonal block are considered in detail, on a matrix $X\in\R^{s\times s}$ they act by
\begin{align}\label{DLLPP}
 \begin{array}{ll}
 \LL_E:&X\mapsto XE+E\T X,\\
 \PP_E:&X\mapsto\Theta\T X\Theta-X,\quad \Theta=e^E,
 \end{array}
\end{align}
see \eqref{expE}. 
Both $\LL$ and $\PP$ map the set of symmetric matrices onto itself. 
Simultaneous similarity transformations of the matrices $E$ and $\Theta$ lead to congruences for these maps.
Indeed, with $X\in\R^{s\times s}$ and nonsingular  $U\in\R^{s\times s}$ we have 
\begin{align}\notag
 U\T\LL_E(X)U=&(U\T X U)(U^{-1} EU)+(U^{-1} EU)\T(U\T XU)\\\label{KongL}
  =&\LL_{U^{-1} EU}(U\T XU),
 \\\label{KongP}
 U\T\PP_E(X)U=&
 \PP_{U^{-1} EU}(U\T XU).
\end{align}
These congruence-similarity transformations will be an important tool in the rest of the paper establishing a similarity between different peer methods.
However, such methods are of practical use only if the matrices $E,\Theta$ have an inherent Vandermonde structure.
This means that they are similar to $V^{-1}EV=\tilde E=DF_0\T$ and $V^{-1}\Theta V=P$ by a real Vandermonde matrix $V$.
Many proofs will be based on the transformed 'Nordsieck' version with the sparse matrix $\tilde E$ and the triangular Pascal matrix $P=\exp(\tilde E)$. 
\par
There is a strong relation between the maps $\LL_E$ and $\PP_E$ which will be helpful.
This relation is easily seen by considering the matrix representations of these maps.
By introducing the $vec$-operator mapping matrices from $\R^{s\times s}$ to vectors from $\R^{s^2}$ by stacking the columns, i.e. $vec(X)=(x_{11},x_{21},\ldots,x_{s-1,s},x_{ss})\T$, these maps may be written with the Kronecker product in the form
\begin{align*}
 vec\big(\LL_E(X)\big)=&(I\otimes E\T+E\T\otimes I)vec(X)=\EE\, vec(X),\\
 vec\big(\PP_E(X)\big)=&(\Theta\T\otimes\Theta\T-I\otimes I)vec(X),
 \end{align*}
with the definition $\EE:=I\otimes E\T+E\T\otimes I$.
We note, that both matrices are nilpotent since they are similar to the strictly lower triangular matrices $I\otimes\tilde E\T+\tilde E\T\otimes I$ and $P\T\otimes P\T-I\otimes I$.
Now, since the matrices $I\otimes E\T$ and $E\T\otimes I$ commute and $\Theta=e^E$, \eqref{expE}, the matrix of the map $\PP_E$ satisfies
\begin{align}\notag
\Theta\T\otimes\Theta\T-I\otimes I
 =&e^{E\T}\otimes e^{E\T}-I\otimes I=e^{E\T\otimes I}e^{I\otimes E\T}-I\otimes I\\\label{Matpp}
 =&e^{\EE}-I =\EE\,\varphi(\EE),
\end{align}
where $\varphi$ is the holomorphic function
\begin{align}\label{varphi}
 \varphi(z)=\frac{e^z-1}{z}=\int_0^1 e^{tz}\,dt,\quad z\in\C.
\end{align}
The function $\varphi$ is a fundamental tool in exponential time integrators, e.g. \cite{HoOs10}.
Translating \eqref{Matpp} back to maps on matrices we write $\varphi(\EE)vec(X)=vec(\Phi_E(X))$.
Some properties of the map $\Phi_E$ are proved in the following lemma directly from an integral representation.
The lemma will use the notion of real holomorphic functions.
This means a function $q(z)$ with real coefficients having an absolutely convergent expansion in a neighborhood of $z=0$.
Since such functions are applied to the nilpotent matrix $E\in\R^{s\times s}$ only, $q(E)$ is always a polynomial of degree not exceeding $s-1$.
\begin{lemma}\label{Lphix}
With the nilpotent matrix $E\in\R^{s\times s}$ let $\Phi_E:\,\R^{s\times s}\to \R^{s\times s}$ be the linear map given by
\begin{align*}
 \Phi_E(X):=\int_0^1 e^{tE\T}Xe^{tE}\,dt.
\end{align*}
Then, with any symmetric matrix $X\in\R^{s\times s}$ the following properties hold.
\\{}
a)\quad $\Phi_E(X)E+E\T\Phi_E(X)=\Theta\T X\Theta-X$, i.e.
\begin{align}\label{LLEPP}
 \PP_E=\LL_E\circ\Phi_E=\Phi_E\circ\LL_E.
\end{align}
b)\quad 
 $\Phi_E:\,\DD^s\to\DD^s,\quad \Phi_E:\,\DD_0^s\to\DD_0^s$.
\\{}
c)\quad 
  With any nonsingular matrix $U$ holds
 $$U\T \Phi_E(X)U=\Phi_{U^{-1}EU}(U\T XU).$$
\qquad
 With any real holomorphic function $q$ satisfying $q(0)=1$ holds
 $$q(E\T)\Phi_E\big(X\big)q(E)=\Phi_E\big(q(E\T) Xq(E)\big).$$
d)\quad For any matrix $K$ from the kernel of $\LL_E$ holds $\Phi_E(K)=K$.
\\{}
e)\quad The kernels of the maps $\LL_E$ and $\PP_E$ are identical.
\end{lemma}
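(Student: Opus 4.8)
The plan is to read off every statement from the integral representation of $\Phi_E$, using only that $E$ commutes with $e^{tE}$ (and $E\T$ with $e^{tE\T}$) and that $e^{tE}$ is invertible. For part a) I would set $g(t):=e^{tE\T}Xe^{tE}$ and differentiate; moving the derivative factors through the exponentials gives $g'(t)=E\T g(t)+g(t)E$. Integrating over $[0,1]$ yields $g(1)-g(0)=\Theta\T X\Theta-X$ on the one hand and $E\T\Phi_E(X)+\Phi_E(X)E=\LL_E(\Phi_E(X))$ on the other, which is the identity $\LL_E\circ\Phi_E=\PP_E$. For the reverse order one slides the factor $E$ of $\LL_E(X)=XE+E\T X$ to the right, and $E\T$ to the left, past the exponentials inside the integral defining $\Phi_E(\LL_E(X))$, obtaining $\int_0^1(g(t)E+E\T g(t))\,dt$ again; hence $\Phi_E$ and $\LL_E$ commute and both composites equal $\PP_E$.

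For part b), symmetry of $\Phi_E(X)$ for symmetric $X$ is immediate by transposing under the integral sign. For definiteness I would use, for $x\in\C^s$, the identity $x^\ast\Phi_E(X)x=\int_0^1(e^{tE}x)^\ast X(e^{tE}x)\,dt$: each integrand is $\ge0$ when $X\succeq0$, and, since $e^{tE}$ is nonsingular, is $>0$ for $x\ne0$ when $X\succ0$, so the (strict or weak) inequality passes to the integral. For part c), the conjugation identity follows by substituting $e^{tE}U=Ue^{t(U^{-1}EU)}$ and its transpose $U\T e^{tE\T}=e^{t(U^{-1}EU)\T}U\T$ into the integral; the identity with a real holomorphic $q$ follows because $q(E)$ is a polynomial in $E$, hence commutes with every $e^{tE}$ (and $q(E\T)=q(E)\T$ commutes with $e^{tE\T}$), so $q(E\T)$ and $q(E)$ can be pushed inside the integral onto $X$, the normalization $q(0)=1$ being needed only downstream to keep $q(E)$ invertible. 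For part d), if $\LL_E(K)=E\T K+KE=0$ then $g(t)=e^{tE\T}Ke^{tE}$ has $g'(t)=e^{tE\T}(E\T K+KE)e^{tE}=0$, so $g\equiv g(0)=K$ and $\Phi_E(K)=\int_0^1K\,dt=K$.

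For part e), the inclusion $\ker\LL_E\subseteq\ker\PP_E$ is immediate from $\PP_E=\Phi_E\circ\LL_E$. The reverse inclusion is the only point needing more than bookkeeping: it requires that $\Phi_E$ be injective, so that $\ker(\Phi_E\circ\LL_E)=\ker\LL_E$. I would deduce this from \eqref{Matpp}: the matrix of $\Phi_E$ is $\varphi(\EE)$ with $\varphi(0)=1$ and $\EE$ nilpotent, hence $\varphi(\EE)=I+\tfrac12\EE+\cdots$ is unipotent and invertible; so the kernels coincide. The main obstacle is therefore not computational but recognising that part e) hinges on the invertibility of $\Phi_E$, which is exactly where the exponential relation $\Theta=e^E$ and the nilpotency of $E$ enter decisively; everything else reduces to differentiating and integrating $t\mapsto e^{tE\T}Xe^{tE}$.
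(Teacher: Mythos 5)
Your proposal is correct and follows essentially the same route as the paper: part a) by the fundamental theorem of calculus applied to $g(t)=e^{tE\T}Xe^{tE}$ (the paper phrases this as integration by parts), b) and c) directly from the integral representation, d) from $\LL_E(K)=0$ forcing the integrand to be constant, and e) from invertibility of $\varphi(\EE)$ with $\EE$ nilpotent (the paper verifies this via the triangular Nordsieck form, you via unipotence — the same fact). You also correctly isolate the one nontrivial point, namely that e) rests on the injectivity of $\Phi_E$.
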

\begin{proof}
a) Partial integration shows that
\begin{align*}
 \Phi_E(X)E=&\int_0^1 e^{tE\T}XEe^{tE}\,dt
  =\Big[e^{tE\T}Xe^{tE}\Big]_0^1-E\T\int_0^1 e^{tE\T}Xe^{tE}\,dt\\
  =&e^{E\T}Xe^{E}-X-E\T \Phi_E(X).
\end{align*}
Reordering terms and recalling $e^E=\Theta$ yields $\Phi_E(X)E+E\T \Phi_E(X)=\LL_E(\Phi_E(X))=\Theta\T X\Theta-X=\PP_E(X)$.
The same computations apply for $\Phi_E(XE+E\T X)$.
\\{}
b) Definiteness follows directly from the integral representation.
In fact, with any $X\in\R^{s\times s}$, $u\in\C^s\setminus\{0\},$ holds
\[ u^\ast \Phi_E(X) u=\int_0^1 (e^{tE}u)^\ast X(e^{tE}u)dt.\]
Hence, the integral is positive or nonnegative whenever $X\in\DD^s$ or $X\in\DD_0^s$, respectively.
\\{}
c) Let $E':=U^{-1}EU$. Then,
\begin{align*}
 U\T\Phi_E( X)U=&\int_0^1 U\T e^{tE\T}U^{-T}U\T XUU^{-1}e^{tE}Udt\\
 =&\int_0^1 e^{t(E')\T}U\T XUe^{tE'}dt.
\end{align*}
A matrix of the form $U=q(E)$ commutes with $E$ leading to $E'=E$.
\\{}
d) Let $K$ be any matrix satisfying $KE=-E\T K$.
Then $KE^j=(-E\T)^jK,\,j\in\N$, and from the series expansion of the exponential function follows $Ke^{tE}=e^{-tE\T}K$.
Hence,
\[ \Phi_E(K)=\int_0^1 e^{tE\T}Ke^{tE}\,dt=\int_0^1 e^{tE\T}e^{-tE\T}K\,dt=K.\]
e) The matrix representation of the linear map $\Phi_{E}$ is $\varphi(\EE)$.
By assertion c) we may consider the map with the transformed matrix $\tilde E\T\otimes I+I\otimes\tilde E\T$ which is strictly lower triangular.
Hence, $\varphi(\tilde E\T\otimes I+I\otimes\tilde E\T)$ is lower triangular with unit diagonal and, hence, nonsingular.
By \eqref{LLEPP} $\Phi_{E}\circ\LL_{E}$ and $\LL_{E}$ have the same kernel.
\end{proof}
\begin{remark}
Since $E$ is nilpotent the map $\Phi_E=\sum_{k=0}^\infty\frac1{(k+1)!} \LL_E^k$ is easily evaluated in practice.
For $X\in\R^{s\times s}$ the recursion $X_0:=X$, $$X_{k}:=\LL_E(X_{k-1})=X_{k-1}E+E\T X_{k-1},\ k=1,2,\ldots$$
terminates with $X_{2s-1}=0$.
This follows inductively from the fact that each application of the transformed map $\LL_{\tilde E}$ introduces one more trivial antidiagonal (southwest to northeast) in $X_k$ starting with $e_1\T X_1e_1=0$.
Hence,
\begin{align}\label{Phireihe}
\Phi_E(X)=\sum_{k=0}^{2s-2}\frac1{(k+1)!}X_k.
\end{align}
\end{remark}
With the definition \eqref{DLLPP} at hand the leading diagonal block $\hat M_{11}$ of the test matrix \eqref{MThtr} may be written in a more compact form.
Its semi-definiteness is necessary for the semi-definiteness of the whole test matrix itself.
With the factorization \eqref{LLEPP} this requires that
\begin{align}\label{dfntlp}
 \hat M_{11}=\LL_E(G\T ZG)-\PP_E(\hat W)=\LL_E\big(G\T ZG-\Phi_E(\hat W)\big)\succeq0.
\end{align}
It is important to note that due to the singularity of the map $\LL_E$ semi-definiteness of an image $\LL_E(X)\succeq0$ is a strong restriction for both $X$ and the image $\LL_E(X)$.
These conditions follow easily from the simple structure of the transformed matrix $\tilde E=DF_0\T$.
Here, the image $U=\LL_{\tilde E}(X)$ is given by
\begin{align}\label{lleij}
  u_{ij}=(j-1)x_{i,j-1}+(i-1)x_{i-1,j},\quad 1\le i,j\le s,
\end{align}
where elements with an index zero are missing.
Hence, the first diagonal element of the image is always zero, $u_{11}=0$, and  the first row and column must vanish completely for $U$ to be semi-definite.
In fact, only a block on the southeast corner of $U$ may be nontrivial.
The precise structure is given in the next lemma.
\begin{lemma}\label{lmspsd}
Let $X\in\R^{s\times s}$ be symmetric and let $U:=\LL_{\tilde E}(X)\succeq0$.
Then, $u_{ij}=0$ for $\min\{i,j\}\le\lfloor(s+1)/2\rfloor$ and also $x_{ij}=0$ for $i+j\le s$.
\end{lemma}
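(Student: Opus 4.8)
The plan is to work entirely with the explicit entrywise formula \eqref{lleij} for $U=\LL_{\tilde E}(X)$, namely $u_{ij}=(j-1)x_{i,j-1}+(i-1)x_{i-1,j}$, and to exploit the elementary fact that a positive semidefinite matrix with a zero diagonal entry must have the entire corresponding row and column equal to zero. I would set $k:=\lfloor(s+1)/2\rfloor$ and prove by induction on $r=1,2,\dots,k$ the combined statement: (i) the first $r$ rows and columns of $U$ vanish, i.e. $u_{ij}=0$ whenever $\min\{i,j\}\le r$; and (ii) $x_{ij}=0$ whenever $i+j\le r+1$. These two claims feed each other, which is why they should be carried through the induction together.

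For the base case $r=1$: formula \eqref{lleij} gives $u_{11}=0$ automatically (the index-zero terms are absent), so semidefiniteness of $U$ forces the whole first row and column to vanish, which is (i) for $r=1$. Then for $j\ge 1$, $0=u_{1,j+1}=j\,x_{1,j}+0$ (the term $(i-1)x_{i-1,j}$ vanishes since $i=1$), hence $x_{1,j}=0$ for all $j$; by symmetry $x_{i,1}=0$ for all $i$. In particular $x_{ij}=0$ for $i+j\le 2$, i.e. (ii) for $r=1$. For the inductive step, assume (i) and (ii) hold for $r-1$ with $r\le k$. First I upgrade (ii): using the already-known vanishing of rows/columns $1,\dots,r-1$ of $U$, I evaluate $u_{i,j}$ for $i+j = r+1$ along an antidiagonal where the larger of $i,j$ is still $\ge$ something controlled by $r\le\lfloor(s+1)/2\rfloor$ (so no entry index exceeds $s$), and $u_{ij}=0$ there combines with the inductive hypothesis $x_{i-1,j}=x_{i,j-1}=0$ from neighboring antidiagonals — wait, more carefully: \eqref{lleij} relates $u_{ij}$ to $x_{i,j-1}$ and $x_{i-1,j}$, both on the antidiagonal $i+j-1=r$, which is covered by (ii) for $r-1$ only if $r\le r$, i.e. it is exactly the next case. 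So instead I use the vanishing of $u$ on row/column $r-1$ (from (i) for $r-1$) to step one antidiagonal outward: $u_{r-1,j}=0$ gives $(j-1)x_{r-1,j-1}+(r-2)x_{r-2,j}=0$, and with $x_{r-2,j}=0$ from (ii) for $r-1$ (valid when $(r-2)+j\le r$, i.e. $j\le 2$) I would instead sweep $j$ upward, at each step using the previously established $x$-values, to conclude $x_{r-1,j}=0$ for all $j$, hence by symmetry $x_{i,r-1}=0$; combined with the inductive hypothesis this yields $x_{ij}=0$ for $i+j\le r+1$, i.e. (ii) for $r$. Then with rows/columns $1,\dots,r-1$ of $X$ now known to vanish appropriately, \eqref{lleij} shows $u_{rr}=(r-1)x_{r,r-1}+(r-1)x_{r-1,r}=0$ (both $x$-entries lie on antidiagonal $2r-1\ge r+1$... here I must check $2r-1$ versus $r+1$: for $r\ge 2$, $2r-1\ge r+1$, so these are \emph{not} yet covered — I need them to vanish, so actually the argument must be arranged so that $x_{r,r-1}$ and $x_{r-1,r}$ are killed by (ii) at level $r$, which requires $2r-1\le r+1$, i.e. $r\le 2$). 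This tension is the crux.

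The main obstacle, then, is bookkeeping the induction so that each needed $x_{ij}$-entry has been established \emph{before} it is used, and verifying that $k=\lfloor(s+1)/2\rfloor$ is exactly the point where the argument stops — precisely because beyond it the antidiagonal $i+j=s$ wraps around past the matrix size and the relations \eqref{lleij} no longer give new information (indices exceed $s$). I expect the cleanest organization is a single induction proving only claim (i) about $U$, where at stage $r$ one uses semidefiniteness ($u_{rr}=0\Rightarrow$ row/column $r$ of $U$ is zero) after having shown $u_{rr}=0$; and $u_{rr}=0$ follows from \eqref{lleij} because $x_{r,r-1}$ and $x_{r-1,r}$ were forced to zero at the previous stage via $u_{r-1,r}=0$ and $u_{r-1,r-1}=0$ respectively. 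Unwinding this, one reads off $x_{ij}=0$ for $i+j\le s$ as a byproduct. I would present it in the Nordsieck coordinates throughout (as the lemma statement already does, writing $\LL_{\tilde E}$), so that \eqref{lleij} is available verbatim, and remark at the end that the symmetric case $i\leftrightarrow j$ is handled by the symmetry $X\T=X$, $U\T=U$. The only genuinely delicate point is the floor function: for $s$ even versus odd the last usable antidiagonal differs by one, and I would dispatch both parities by noting that the induction runs through $r=\lfloor(s+1)/2\rfloor$ and checking directly that at this $r$ all entry indices appearing in \eqref{lleij} stay within $\{1,\dots,s\}$, while at $r+1$ they would not.
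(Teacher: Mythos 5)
Your overall strategy --- the entrywise formula \eqref{lleij} combined with the fact that a positive semidefinite matrix with a vanishing diagonal entry has a vanishing row and column, organized as an induction over rows of $U$ --- is the same as the paper's, but neither of the two induction invariants you propose actually closes, and the unresolved point is exactly the ``tension'' you flag. Your invariant (ii), $x_{ij}=0$ for $i+j\le r+1$, is too weak twice over: even if the induction ran to $r=k=\lfloor(s+1)/2\rfloor$ it would only yield $x_{ij}=0$ for $i+j\le k+1$, far short of $i+j\le s$; and, as you observe yourself, it cannot supply the entries $x_{r-1,r}$ on antidiagonal $2r-1$ needed to conclude $u_{rr}=0$. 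Your proposed repair is also incorrect in detail: by \eqref{lleij}, $u_{r-1,r}=(r-1)x_{r-1,r-1}+(r-2)x_{r-2,r}$ does not involve $x_{r-1,r}$ at all, so that entry is not ``forced to zero via $u_{r-1,r}=0$''. The entry $x_{r-1,r}$ appears in $u_{r-1,r+1}=r\,x_{r-1,r}+(r-2)x_{r-2,r+1}$, so killing it presupposes $x_{r-2,r+1}=0$ --- an entry two rows higher and two columns further out on the same antidiagonal --- and recursing leads all the way back to $x_{1,2r-2}$ in the first row. In other words, one antidiagonal at a time is the wrong unit of progress.

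The invariant that works (and is the paper's) carries a whole row segment of $X$ at each stage: after treating row $i$ one knows $u_{ij}=0$ for all $j$ and $x_{i\ell}=0$ for $i\le\ell\le s-i$. Then $u_{i+1,i+1}=2i\,x_{i,i+1}$ vanishes precisely when $i+1\le s-i$, i.e. $i+1\le\lfloor(s+1)/2\rfloor$ --- this is where the floor enters --- so semidefiniteness kills row $i+1$ of $U$, and $0=u_{i+1,j}=i\,x_{i,j}+(j-1)x_{i+1,j-1}$ together with $x_{i,j}=0$ for $j\le s-i$ sweeps out the next, shorter segment $x_{i+1,\ell}=0$, $\ell\le s-i-1$. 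The union of these segments over $i\le k$, together with the symmetry of $X$, covers all of $\{(i,j):i+j\le s\}$, which is how the second assertion is recovered; your antidiagonal bookkeeping never reaches it. So the missing ingredient is not a new idea but the correct induction hypothesis: you must propagate the long row segments $x_{i,\ell}=0$, $\ell\le s-i$ (long for small $i$, and consumed only gradually by later rows), rather than advancing one antidiagonal per step.
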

\begin{proof}
By induction over the rows $i=1,\ldots,k:=\lfloor(s+1)/2\rfloor$ we show that $u_{ij}=0,\,j=i,\ldots,s$ and $x_{ij}=0,\,j=i,\ldots,s-i$.
If some diagonal element of $U\in\DD_0^s$ vanishes, $u_{ii}=0$, then by semi-definiteness all elements of row and column number $i$ vanish, as well.
Hence, the first row and column of $U$ are zero, since $u_{11}=0$.
In \eqref{lleij} this gives $0=u_{1j}=(j-1)x_{1,j-1}$, $j=2,\ldots,s$, i.e. the assertion for $i=1$.
Now, by induction with $i\ge2$ we get $u_{ii}=2(i-1)x_{i-1,i}=0$ for $2i-1\le s$, i.e. $i\le k$, and this leads again to $u_{ij}=0$, $j=i,\ldots,s$.
In \eqref{lleij} follows again $0=u_{ij}=(i-1)x_{i-1,j}+(j-1)x_{i,j-1}=(j-1)x_{i,j-1}$ for $j=i,\ldots,s+1-i$ leading to $x_{i\ell}=0$ for $i\le\ell\le s-i$.
\end{proof}
\par
It is also possible to describe the kernel of the map $\LL_{\tilde E}$ on the set of symmetric matrices explicitly.
By \eqref{lleij} and also since $e_s\T\tilde E=0$, the rank-one matrix $e_se_s\T$ is always an element of this kernel.
Other elements can be derived from the proof of the last lemma.
As in the base case of induction the first row of any matrix $K$ satisfying $\LL_E(K)=0$ vanishes with the possible exception of the last element, $k_{1j}=0,\,j=1,\ldots,s-1$.
Now, \eqref{lleij} is a linear one-step recursion along antidiagonals and it follows that also $k_{ij}=0$ for $i+j\le s$.
On antidiagonals with $i+j>s$, $k_{is}\not=0$, this recursion leads to elements with alternating signs.
Hence, corresponding solutions $K$ are symmetric only if the antidiagonal  contains a diagonal element, i.e. if $i+j$ is even.
This result is formulated as a simple lemma which by \eqref{KongL} remains valid with any matrix $E$ similar to $\tilde E$.
\begin{lemma}\label{LKern}
The kernel of the map $\LL_{E}$ on the set of symmetric matrices has dimension $\lfloor(s+1)/2\rfloor$.
\end{lemma}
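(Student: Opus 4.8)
The plan is to transport the problem to the sparse Nordsieck form $\tilde E=DF_0\T$ and then determine the kernel one antidiagonal at a time. Since $E$ is similar to $\tilde E$, write $\tilde E=U^{-1}EU$ with $U$ nonsingular (the Vandermonde matrix $V$ in the relevant case). The congruence--similarity identity \eqref{KongL} then gives $U\T\LL_E(X)U=\LL_{\tilde E}(U\T XU)$, so the linear bijection $X\mapsto U\T XU$ of the space of symmetric matrices maps $\ker\LL_E$ onto $\ker\LL_{\tilde E}$; in particular both kernels have the same dimension, and it suffices to compute $\dim\ker\LL_{\tilde E}$ using the entrywise formula \eqref{lleij} for $\LL_{\tilde E}(K)=0$.

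The structural point I would exploit is that $\LL_{\tilde E}$ is graded by antidiagonals: by \eqref{lleij} the entry $u_{ij}$ of $\LL_{\tilde E}(K)$ depends only on $k_{i,j-1}$ and $k_{i-1,j}$, which both lie on the antidiagonal $i+j-1$ of $K$, so $\LL_{\tilde E}$ carries a matrix supported on antidiagonal $\ell$ to one supported on antidiagonal $\ell+1$. Since the antidiagonals are invariant under transposition, a symmetric $K$ lies in $\ker\LL_{\tilde E}$ if and only if each of its antidiagonal components $K|_\ell$ does, and each such component is again symmetric. Hence the kernel splits as a direct sum over $\ell=2,\dots,2s$, and it remains to compute the dimension $d_\ell$ of the summand living on antidiagonal $\ell$.

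On a fixed antidiagonal, abbreviating $a_p:=k_{p,\ell-p}$, the equations $u_{ij}=0$ with $i+j=\ell+1$ become the one-step recursion $(\ell-i)a_i=-(i-1)a_{i-1}$ over the admissible range $\max(1,\ell+1-s)\le i\le\min(s,\ell)$. Two regimes occur. If $\ell\le s$, the instance $i=1$ forces $a_1=0$ and, since every later coefficient $\ell-i$ is nonzero, the vanishing propagates along the whole antidiagonal, so $d_\ell=0$. If $\ell>s$, no coefficient in the recursion degenerates, so every entry is a fixed nonzero multiple of a single free one --- concretely $a_p$ is proportional to $(-1)^p(p-1)!\,(\ell-p-1)!$ --- giving a one-dimensional solution space before any symmetry requirement. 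Comparing $a_p$ with $a_{\ell-p}$ shows this solution is symmetric exactly when $\ell$ is even (and antisymmetric otherwise), so $d_\ell=1$ for even $\ell>s$, with $\ell=2s$ reproducing $e_se_s\T$, and $d_\ell=0$ in all remaining cases.

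Summing over $\ell$, the dimension equals the number of even integers in $(s,2s]$, which a one-line parity check on $s$ identifies with $\lfloor(s+1)/2\rfloor$; the matrices realizing these dimensions have disjoint supports and are therefore independent. I expect the only delicate part to be the end-of-antidiagonal bookkeeping in the recursion --- pinning down exactly when an index reaching $1$ or $s$, or a vanishing coefficient $\ell-i$ or $i-1$, kills the antidiagonal versus when it leaves a genuine free parameter --- since that is what separates the $\ell\le s$ and $\ell>s$ regimes and hence fixes the count; the symmetry/parity step and the reduction to $\tilde E$ are routine by comparison.
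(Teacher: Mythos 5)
Your proof is correct and follows essentially the same route as the paper, which also reduces to the Nordsieck form via \eqref{KongL}, observes that \eqref{lleij} is a one-step recursion along antidiagonals forcing $k_{ij}=0$ for $i+j\le s$, and notes that the alternating-sign solutions on antidiagonals with $i+j>s$ are symmetric only when $i+j$ is even. Your version merely makes the index bookkeeping and the final count of even antidiagonals in $(s,2s]$ explicit, which the paper leaves implicit.
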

A discussion is similar for the pre-image of semi-definite matrices $U$.
Since the map $\PP_{\tilde E}$ is sometimes more convenient in the construction of methods (see Section \ref{Secppm}) the following result is formulated for this map.
\example\label{urbu}
For $s=4$ the general symmetric pre-image of a semi-definite matrix $U=\PP_{\tilde E}(X)\in\DD_0^s$ is given by
\begin{align}\label{KSallg}
 X=\begin{pmatrix}
 0&0&0&-\frac34 u_{33}\\
 0&0&\frac14u_{33}&-\frac32x_{33}-\frac34u_{33}+\frac12u_{34}\\
 0&\ast&x_{33}&\frac14u_{33}-\frac13u_{34}+\frac16u_{44}\\
 \ast&\ast&\ast&x_{44}
 \end{pmatrix}.
\end{align}
This means that only $u_{33}, u_{34},u_{44}$ of the image may be nontrivial and $x_{33}$ and $x_{44}$ are parameters of the kernel of $\PP_{\tilde E}$.
\subsection{Explicit feasible solutions}
Returning to the feasibility problem  the test matrix \eqref{MThtr} is rewritten with Lemma~\ref{Lphix} in the form
\begin{align}\notag
 \hat\MM =&\begin{pmatrix}\hat M_{11}&\hat M_{12}\\
   \hat M_{12}\T&\hat W\end{pmatrix}\\\label{MThf}
 =&\begin{pmatrix}
   \LL_E\big(G\T ZG-\Phi_E(\hat W)\big)&\hat W-G\T Z+(G\T ZG)E\\
   \hat W-ZG+E\T(G\T ZG)&\hat W \end{pmatrix}.
\end{align}
By Lemma~\ref{lmspsd} $\hat M_{11}$ has a rank defect $\lfloor(s+1)/2\rfloor$ and by semi-definiteness $\hat M_{12}$ must have the same, at least.
Equation \eqref{MThf} has been used as a definition of the blocks $\hat M_{ij}$ so far.
However, with a matrix $\hat\MM\succeq0$ having an appropriate kernel it may be considered as a set of equations where $\hat\MM$ is the slack variable of the feasibility problem.
Peer methods are suited for stiff problems only if the eigenvalues of the coefficient matrix $\Gmm$ have nonnegative real part.
This property follows directly from \eqref{MThf}.
\begin{lemma}\label{LEWG}
 Let $G\in\R^{s\times s}$ and $Z\in\DD^s$, $\hat W\in\DD_0^s$ (or $\hat W\in\DD^s$) satisfy \eqref{MThf} with $\hat\MM\in\DD_0^{2s}$.
 Then, all eigenvalues of $G$ have nonnegative (positive) real part.
\end{lemma}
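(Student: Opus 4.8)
The goal is to show that the eigenvalues of $G$ lie in the closed right half-plane, using only the semi-definiteness of $\hat\MM$ together with the positivity of $Z$. The natural strategy is to extract a scalar inequality from $\hat\MM\succeq0$ by testing it with a cleverly chosen complex vector built from an eigenvector of $G$, mirroring the vector-of-the-form-$(x\T,\alpha x\T)\T$ trick used in the proof of Theorem~\ref{TAAStb}. So first I would let $Gx=\mu x$ with $x\neq0$, $\mu\in\C$, and try to relate $\mu$ to the diagonal block $\hat W$ and the off-diagonal block $\hat M_{12}=\hat W-G\T Z+(G\T ZG)E$ appearing in \eqref{MThf}.

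The cleaner route, I expect, is to work not with the transformed matrix $\hat\MM$ but with the original test matrix $\MM$ in \eqref{MDef}, since the congruence \eqref{MThtr} preserves semi-definiteness and the statement only concerns eigenvalues of $G$. Recall that in the proof of Lemma~\ref{LNrmM} the quadratic form of $\MM$ with $(Y_m^\ast,Y_{m-1}^\ast)$ was computed, and more simply: testing $\MM$ with a vector of the form $(x\T,0)\T$ gives $x^\ast(G\T Z+ZG-W)x\ge0$, hence $x^\ast(G\T Z+ZG)x\ge x^\ast Wx\ge0$ since $W\succeq0$. Now if $Gx=\mu x$, then $x^\ast(G\T Z+ZG)x = \bar\mu\, x^\ast Zx + \mu\, x^\ast Zx = 2(\Re\mu)\,x^\ast Zx$. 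Since $Z\succ0$ we have $x^\ast Zx>0$, so $\Re\mu\ge0$, which is exactly the claim. For the strict version, if $\hat W\in\DD^s$ (equivalently $W\succ0$), then $x^\ast(G\T Z+ZG-W)x\ge0$ forces $2(\Re\mu)x^\ast Zx \ge x^\ast Wx>0$, hence $\Re\mu>0$.

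The one point that needs care is translating the hypothesis: the lemma is stated in terms of $\hat\MM$, $\hat W$, and $\Zh=G\T ZG$, so I would either (i) observe that $\hat\MM\succeq0$ is congruent to $\MM\succeq0$ via the nonsingular transformation in \eqref{MThtr} and hence $\MM\succeq0$ with the original $W=\Theta\T\hat W\Theta$, which is $\succeq0$ (resp.\ $\succ0$) iff $\hat W$ is; or (ii) argue directly on $\hat\MM$ by testing with $(x\T,0)\T$, which yields $x^\ast\hat M_{11}x\ge0$, i.e.\ $x^\ast\LL_E(\Zh-\Phi_E(\hat W))x\ge0$ — but this is less transparent because of the nilpotent map $\LL_E$. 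Route (i) is cleaner: the congruence argument is immediate and then the $(x\T,0)\T$ test on $\MM$ finishes it. The only genuinely delicate step is the strict case, where I must confirm that $W\succ0$ truly follows from $\hat W\succ0$ (it does, since $W=\Theta\T\hat W\Theta$ with $\Theta$ nonsingular) — this is the place where an inattentive reader might slip, but it is routine.
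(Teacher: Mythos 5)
Your proposal is correct and essentially coincides with the paper's proof: the key inequality in both cases is $G\T Z+ZG\succeq W=\Theta\T\hat W\Theta\succeq0$, and your test of $\MM$ with $(x\T,0)\T$ is exactly the paper's test of $\hat\MM$ with $(x\T,-x\T)\T$ transported through the congruence \eqref{MThtr}. The only cosmetic difference is the final step, where you evaluate the quadratic form at an eigenvector of $G$ while the paper performs a Cholesky congruence with $Z=L_ZL_Z\T$ and invokes the field-of-values argument for $L_Z\T GL_Z^{-T}$; both are equivalent and valid.
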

\begin{proof}
The equation for the off-diagonal block is written as $G\T Z=(G\T ZG)E+\hat W-\hat M_{12}$.
Adding its transpose leads to
\begin{align*}
 ZG+G\T Z=&(G\T ZG)E+E\T(G\T ZG)+2\hat W-\hat M_{12}-\hat M_{12}\T\\
 =&\Theta\T\hat W\Theta+\hat M_{11}-\hat M_{12}-\hat M_{12}\T+\hat W\\
 \succeq&\Theta\T\hat W\Theta\succeq0.
\end{align*}
The last step used the fact that for  $\hat\MM\in\DD_0^{2s}$ also $\hat M_{11}-\hat M_{12}-\hat M_{12}\T+\hat W\in\DD_0^s$.
A congruence transformation with the inverse factor of a Cholesky decomposition $Z=L_ZL_Z\T$ leads to the symmetric matrix
\[  L_Z\T G L_Z^{-T}+L_Z^{-T}G\T L_Z\succeq L_Z^{-T}\Theta\T\hat W\Theta L_Z^{-1}\succeq0.\]
Hence, the symmetric part of the matrix $L_Z\T G L_Z^{-T}$ is positive semi-definite and all its eigenvalues have nonnegative real part.
Of course, $L_Z\T G L_Z^{-T}$ is similar to $G$.
The argument for $\hat W\in\DD^s$ is the same.
\end{proof}
\par
The leading blocks $\hat M_{11}$ and $\hat M_{12}$ are highly rank deficient by Lemma~\ref{lmspsd}.
Hence, simple solutions may be obtained where these two blocks are chosen zero to start with.
This means that solutions $Z\in\DD^s$, $\hat W\in\DD_0^s$, $G\in\R^{s\times s}$, of the following system are sought:
\begin{align}\label{MSoK}
 \begin{array}{rl}
 \LL_E\big(G\T ZG-\Phi_E(\hat W)\big)=&0,\\
 \hat W-G\T Z+(G\T ZG)E=&0.
 \end{array}
\end{align}
The second equation is not solved for $\hat W$ to eliminate it from the system since symmetry and (semi-) definiteness of $\hat W$ are crucial.
Enforcing these properties would lead to difficult side conditions.
\par
The first equation of the system \eqref{MSoK} contains the unknown matrix  $\Zh=G\T ZG$.
For nonsingular $G$ this matrix is definite again and may be considered to be the unknown weight matrix itself.
Then, the second equation takes the form $0=\hat W-\Zh(G^{-1}-E)$.
The first equation in \eqref{MSoK} means that $\Zh-\Phi_E(\hat W)$ is an element of the kernel of $\LL_E$, for instance zero.
Now, choosing $\hat W\succ0$ and solving the first equation with $\Zh=\Phi_E(\hat W)$ and the second for $H:=G^{-1}=E+\Zh^{-1}\hat W$ yields a special feasible solution.
This gives the following general theorem.
\begin{theorem}\label{Tallgs}
For each $s\in\N$ there exist stiffly accurate A-stable peer methods with $s$ stages and order $s-1$.
\end{theorem}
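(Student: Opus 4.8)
The plan is to construct, for every $s$, an explicit feasible triple $(G,W,Z)$ for the A-stability criterion of Theorem~\ref{TAAStb} in the order-$(s-1)$ form \eqref{MThf}, and then to invoke that theorem. First I would fix any $s$ pairwise distinct nodes $c_1,\dots,c_s$ and form the associated Vandermonde matrix $V$; this fixes the nilpotent differentiation matrix $E=VDF_0\T V^{-1}$ and the extrapolation matrix $\Theta=e^E=VPV^{-1}$, and by \eqref{ordsme} the choice $A:=0$, $B:=(I-GE)\Theta$ then yields a stiffly accurate $s$-stage peer method of order $s-1$ for \emph{any} coefficient $G$. The remaining freedom in $G$ (and in the weights) is spent on making both rank-deficient leading blocks of $\hat\MM$ vanish, i.e.\ on solving the system \eqref{MSoK}; once $\hat\MM=0\oplus\hat W\succeq0$ is achieved, the nonsingular congruence displayed in \eqref{MThtr} gives $\MM\succeq0$ and Theorem~\ref{TAAStb} delivers A-stability (in fact L-stability, since $G$ will be nonsingular and $A=0$).

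For the construction itself I would choose an arbitrary $\hat W\in\DD^s$ (for instance $\hat W=I$) and set $\Zh:=\Phi_E(\hat W)$, which lies in $\DD^s$ by Lemma~\ref{Lphix}b). The first equation of \eqref{MSoK} then holds automatically, since $G\T ZG=\Zh=\Phi_E(\hat W)$ makes the argument of $\LL_E$ vanish. For the second equation I would set $H:=E+\Zh^{-1}\hat W$ and $G:=H^{-1}$, so that $G\T Z=\Zh G^{-1}=\Zh H=\hat W+\Zh E$, and recover $Z:=H\T\Zh H=G^{-T}\Zh G^{-1}$, which is symmetric and, since $\Zh\succ0$ and $H$ is nonsingular, positive definite, together with $W:=\Theta\T\hat W\Theta\succ0$. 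A short direct substitution into \eqref{MThf} (using $G\T ZG=\Zh$ and $G\T Z=\Zh H$) then gives $\hat M_{11}=\LL_E(\Zh-\Phi_E(\hat W))=0$ and $\hat M_{12}=\hat W-\Zh H+\Zh E=0$, as required.

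The single point that genuinely needs an argument --- and the main obstacle --- is the nonsingularity of $H=E+\Zh^{-1}\hat W$, because a nilpotent $E$ plus a term of this form need not be invertible in general. I would dispose of it by multiplying $H$ by $\Zh$ from the left and symmetrizing, using Lemma~\ref{Lphix}a):
\begin{align*}
 \Zh H+H\T\Zh&=\Zh E+E\T\Zh+2\hat W=\LL_E\big(\Phi_E(\hat W)\big)+2\hat W\\
 &=\PP_E(\hat W)+2\hat W=\Theta\T\hat W\Theta+\hat W\succ0 ,
\end{align*}
so the symmetric part of $\Zh H$ is positive definite, whence $\Zh H$ and therefore $H$ are nonsingular; the same identity is also what makes $W=\Theta\T\hat W\Theta$ positive definite (and, read backwards, is essentially the content of Lemma~\ref{LEWG}). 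After this everything is routine, and the theorem holds for all $s\in\N$ with $\hat W$ --- equivalently $Z$ --- a free parameter, which is precisely the parametrization of A-stable peer methods announced in the introduction.
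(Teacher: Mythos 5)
Your construction is exactly the paper's: choose $\hat W\in\DD^s$, set $\Zh=\Phi_E(\hat W)$ to kill the first block of \eqref{MSoK}, solve the second block for $H=E+\Zh^{-1}\hat W$, and invert to get $G$. The only cosmetic difference is that you prove nonsingularity of $H$ by the inline computation $\Zh H+H\T\Zh=\Theta\T\hat W\Theta+\hat W\succ0$, which is precisely the argument the paper delegates to Lemma~\ref{LEWG}; the proof is correct.
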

\begin{proof}
Let $s\in \N$ and let $c_i,\,i=1,\ldots,s$, be an arbitrary set of mutually distinct nodes and $V$ its Vandermonde matrix.
Then, with $E:=VDF_0\T V^{-1}$ and any matrix $\hat W\in\DD^s$ also $\Zh:=\Phi_E(\hat W)\in\DD^s$ by Lemma~\ref{Lphix} and the pair $(\hat W,\Zh)$ solves the first equation in \eqref{MSoK}.
Since the inverse of $\Zh$ exists by definiteness, a solution of the second equation is $H=E+\Zh^{-1}\hat W$.
With these matrices the test matrix \eqref{MThtr} is semi-definite with one nontrivial block $\hat W\in\DD^s$ and the matrix $H$ is nonsingular by Lemma~\ref{LEWG}.
Hence, the matrix $G=H^{-1}$ and $B=(I-GE)\Theta$ from \eqref{ordsme} define an A-stable peer method of order $s-1$ according to Theorem~\ref{TAAStb} and \eqref{MThtr}.
\end{proof}
\par
Unfortunately, Theorem~\ref{Tallgs} has limited practical relevance since the coefficient matrix $G$ will be a full matrix, in general, leading to prohibitively expensive peer methods.
Turning attention to diagonally implicit or parallel methods the discussion is restricted to the simpler case of a nonsingular coefficient matrix $G$.
\subsection{Methods with nonsingular coefficient matrix $G$}\label{SGreg}
For nonsingular $G$ it is convenient to rewrite the test matrix \eqref{MThf} in terms of $\Zh=G\T ZG\in\DD^s$ and $H=G^{-1}$.
Now, the test matrix has a generic form depending on the quadruple $(E,H,\hat W,\Zh)$.
This form remains unchanged under congruence-similarity transformations and is denoted by
\begin{align}\label{MMGen}
 \MG(E,H,\hat W,\Zh):=
 \begin{pmatrix}\LL_E(\Zh)-\PP_E(\hat W)&\hat W-\Zh(H-E)\\
 \hat W-(H-E)\T\Zh&\hat W \end{pmatrix}.
\end{align}
\begin{remark}
Congruence-similarity transformations with block diagonal matrices $\diag(U,U)$ establish an equivalence relation between test matrices of the form \eqref{MMGen}.
In fact by \eqref{KongL}, \eqref{KongP} it follows that
\begin{align}\label{equrel}
  \MG(E,H,\hat W,\hat Z)\succeq0\ \iff
 \ \MG(U^{-1}EU,U^{-1}HU,U\T\hat WU,U\T\hat ZU)\succeq0
\end{align}
for nonsingular $U\in\R^{s\times s}$.
This property will be helpful in the construction of efficient peer methods where $H=G^{-1}$ has lower triangular or diagonal form.
However, the resulting methods only have high order if their $E$-matrix is Vandermonde-similar to $\tilde E=DF_0\T$.
This distinguished Nordsieck version using $U=V$ is used frequently and the actual value of $\MG$ is denoted as $\tilde\MM=\MG(DF_0\T,\tilde H,\tilde W,\tilde Z)$.
In fact, the rank defect of the first block has been identified by Lemma~\ref{lmspsd} for $\tilde M_{11}$.
By semi-definiteness the same rows as in $\tilde M_{11}$ must also vanish in $\tilde M_{12}$ leading to an explicit decomposition in block diagonal form $\tilde\MM=0_k\oplus\MM_D$ with $k=\lfloor(s+1)/2\rfloor$ mentioned in the introduction.
It will be seen in some examples that the nontrivial block may be definite, $\MM_D\in\DD^{2s-k}$, which can be checked reliably with floating point arithmetic.
The construction of A-stable methods will make use of this explicit block decomposition.
\end{remark}
With \eqref{MMGen} the original feasibility problem is again written in the form
\begin{align}\label{TMhat}
 \MG(E,H,\hat W,\Zh)=&
\begin{pmatrix}\hat M_{11}&\hat M_{12}\\
   \hat M_{12}\T&\hat W\end{pmatrix}=\hat\MM
\end{align}
with the matrix $\hat\MM$ as slack variable.
With the results of the last subsections it is possible to derive a rather explicit parametrization of solutions of the feasibility problem $\MG(E,H,\hat W,\Zh)\succeq0$.
We note that in the Nordsieck form pre-images $X$ with $\PP_{\tilde E}(X)=\tilde M_{11}\in\DD_0^s$ can be computed easily, see Example~\ref{urbu}.
\begin{lemma}\label{LLparam}
Let $E\in\R^{s\times s}$ be nilpotent and $\hat W_0\in\DD^s$.
With a matrix $K=K\T$ from the kernel, $\LL_E(K)=0$, and $N=N\T\in\R^{s\times s}$ such that $\PP_E(N)=\hat M_{11}\in\LL_E(\DD_0^s)\cap\DD_0^s$ let $\hat W:=\hat W_0-K-N\in\DD^s$.
Then, for any matrix $\hat M_{12}\in\R^{s\times s}$ such that $\hat\MM\in\DD_0^{2s}$ the triple of matrices
\begin{align}\label{exlsgv}
 \Zh =\Phi_E(\hat W_0),\quad 
 \hat W=\hat W_0-K-N,\quad
  H=E+\Zh^{-1}(\hat W+\hat M_{12}),
\end{align} 
is a feasible solution $\MG(E,H,\hat W,\Zh)\succeq0$ with $\Zh\succ0$.
\end{lemma}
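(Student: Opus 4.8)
The plan is to verify directly that the triple \eqref{exlsgv} makes every block of $\MG(E,H,\hat W,\Zh)$ agree with the intended slack matrix $\hat\MM$, and then invoke the earlier lemmas for positivity. First I would check the $(1,1)$-block. By Lemma~\ref{Lphix}a) we have $\PP_E=\LL_E\circ\Phi_E$, so $\PP_E(\hat W_0)=\LL_E(\Phi_E(\hat W_0))=\LL_E(\Zh)$ with the choice $\Zh=\Phi_E(\hat W_0)$. Hence
\[
 \LL_E(\Zh)-\PP_E(\hat W)=\PP_E(\hat W_0)-\PP_E(\hat W_0-K-N)=\PP_E(K)+\PP_E(N).
\]
Now $\PP_E(K)=\LL_E(\Phi_E(K))=\LL_E(K)=0$ by Lemma~\ref{Lphix}d) together with $\LL_E(K)=0$, and $\PP_E(N)=\hat M_{11}$ by the choice of $N$. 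So the $(1,1)$-block equals $\hat M_{11}$.

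Next I would check the off-diagonal block. From the definition of $H$ in \eqref{exlsgv} we get $\Zh(H-E)=\Zh\Zh^{-1}(\hat W+\hat M_{12})=\hat W+\hat M_{12}$, using that $\Zh\succ0$ is invertible (which holds by Lemma~\ref{Lphix}b) since $\hat W_0\in\DD^s$ gives $\Zh=\Phi_E(\hat W_0)\in\DD^s$). Therefore
\[
 \hat W-\Zh(H-E)=\hat W-(\hat W+\hat M_{12})=-\hat M_{12},
\]
so the $(1,2)$-block of $\MG$ equals $-\hat M_{12}$. Here I should be careful about the sign convention: the block matrix in \eqref{MMGen} has off-diagonal entry $\hat W-\Zh(H-E)$, and the slack matrix in \eqref{TMhat} has off-diagonal entry $\hat M_{12}$, so to make them consistent one replaces $\hat M_{12}$ by $-\hat M_{12}$ throughout, or equivalently states the hypothesis on $\hat\MM$ with the block $-\hat M_{12}$; this is a cosmetic matter of normalization and I would simply align the statement so that $\MG(E,H,\hat W,\Zh)=\hat\MM$ holds verbatim. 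The $(2,2)$-block of $\MG$ is $\hat W$ by inspection, matching \eqref{TMhat}. Also $\hat W=\hat W_0-K-N\in\DD^s$ is assumed in the hypothesis, and $\Zh\succ0$ as noted; the transpose of the off-diagonal block is automatic since $\Zh=\Zh\T$ and $\hat W=\hat W\T$.

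Finally, since all four blocks of $\MG(E,H,\hat W,\Zh)$ coincide with those of $\hat\MM$, and $\hat\MM\in\DD_0^{2s}$ is exactly the standing hypothesis on the free matrix $\hat M_{12}$, we conclude $\MG(E,H,\hat W,\Zh)=\hat\MM\succeq0$, i.e. the triple is feasible, with $\Zh\succ0$ as shown. I expect no serious obstacle here: the lemma is essentially a bookkeeping consequence of Lemma~\ref{Lphix}, and the only points requiring genuine care are the sign/normalization of $\hat M_{12}$ in the off-diagonal block and the verification that the required inverse $\Zh^{-1}$ exists (both handled above). The one implicit assumption worth flagging in the statement is that $\hat M_{11}$ must lie in the range $\LL_E(\DD_0^s)\cap\DD_0^s$ with a symmetric pre-image $N$ under $\PP_E$ — by Lemma~\ref{Lphix}e) the kernels of $\LL_E$ and $\PP_E$ coincide, so such an $N$ exists precisely when $\hat M_{11}\in\PP_E(\mathrm{sym})$, and on the relevant corner block it can be computed explicitly as in Example~\ref{urbu}.
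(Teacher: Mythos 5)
Your proof is correct and takes essentially the same route as the paper's: both reduce the first diagonal block to $\PP_E(K)+\PP_E(N)=\hat M_{11}$ via parts a) and d) of Lemma~\ref{Lphix} and obtain $\Zh\succ0$ from part b); the paper merely runs the computation backwards, deriving $\hat Z=\Phi_E(\hat W+N+K)=\Phi_E(\hat W_0)$ as the general solution of the first block equation and then solving the off-diagonal equation for $H$, rather than verifying the given triple. Your remark on the sign of $\hat M_{12}$ is apt — with the stated $H$ the off-diagonal block of $\MG$ comes out as $-\hat M_{12}$ rather than $\hat M_{12}$, but congruence by $\diag(I,-I)$ shows this does not affect semidefiniteness, so the conclusion is unharmed.
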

\begin{proof}
By Lemma~\ref{Lphix} the equation for the first block may be replaced by $\LL_E(\hat Z)-\PP_E(\hat W)-\hat M_{11}=\LL_E\big(\hat Z-\Phi_E(\hat W+N)\big)=0$ and its general solution is $\hat Z=\Phi_E(\hat W+N)+K=\Phi_E(\hat W+N+K)$ with $\LL_E(K)=0$.
Assuming $\hat W_0:=\hat W+N+K\in\DD^s$ gives $\Zh\in\DD^s$ by part b) of Lemma~\ref{Lphix}.
Now, since $\Zh$ is nonsingular, the off-diagonal block equation may be solved uniquely for $H$.
\end{proof}
\\
Before turning to the construction of practical methods with restrictions on the shape of $H=G^{-1}$ two interesting cases of transformations \eqref{equrel} of known solutions are considered.
First, part c) of Lemma~\ref{Lphix} shows that transformation matrices commuting with $E$ lead to simple relations.
In fact, congruences with a real function $q(E)$ leave the kernel invariant:
if $\LL_E(K)=0$, then
\begin{align*}
\LL_E\big(q(E\T)Kq(E)\big)=&q(E\T)Kq(E)E+E\T q(E\T)Kq(E)\\
 =&q(E\T)(KE+E\T K)q(E)=0.
\end{align*}
Hence, any solution \eqref{exlsgv} without slack elements ($\hat M_{11}=0,\,\hat M_{12}=0$) yields a new solution by the similarity transformation
\begin{align}\label{Lsgqe}
 q(E)^{-1}Hq(E)=E+\big(\Phi_E\big(q(E\T)\hat W_0q(E)\big)\big)^{-1}
  \big(q(E\T)\hat W_0q(E)+K\big).
\end{align}
Hence, if $E$ is already Vandermonde-similar to $\tilde E$ by $E=VDF_0\T V^{-1}$, \eqref{Lsgqe} may be used as a post-processing to change the shape of $H$.
Effects of this transformation on slack terms have to be discussed on a case-by-case basis.
\par
Of fundamental interest may be an exceptional transformation eliminating the weight matrix $\Zh$.
It yields a simple, distinguished representative of the whole equivalence class of relation \eqref{equrel}.
It uses the Cholesky decomposition $\Zh=L_ZL_Z\T$ and introduces the transformed matrices $\check W:=L_Z^{-1}\hat WL_Z^{-T}$, $\check E:=L_Z\T EL_Z^{-T}$, $\check H:=L_Z\T HL_Z^{-T}$.
Then, the matrix $\hat\MM$ from \eqref{TMhat} is congruent with
\begin{align}\label{Mhaken}
  \MG(\check E,\check H,I,\check W)   
  =&\begin{pmatrix}
  \LL_{\check E}(I)-\PP_{\check E}(\check W)
   &\check W-\check H+\check E\\
   \check W-\check H\T+\check E\T &\check W \end{pmatrix}=:\check\MM,
\end{align}
depending linearly on $\check W$ and $\check H$.
Since $Z$ has been eliminated an explicit solution representation for $\check W$ is required now.
This is obtained with the inverse map $\Psi_E$ of $\Phi_E$ satisfying $\Phi_E(\Psi_E(X))=X$ for $X\in\DD^s$ and nilpotent $E$.
Similar to \eqref{Matpp} its matrix representation is given by $\psi(\EE)vec(X)=vec(\Psi_E(X))$ where $\psi$ is the reciprocal of the function $\varphi$ from \eqref{varphi},
\begin{align}\notag
 \psi(z):=\frac1{\varphi(z)}=&-\frac{z}2+\frac{z}2\Big(\frac{e^z+1}{e^z-1}\Big)
 \\\label{defpsi}
 =&-\frac{z}2+1-\sum_{k=1}^\infty(-1)^k\frac{\beta_k}{(2k)!}z^{2k}.
\end{align}
The series expansion of $\psi$ has only one odd term $-z/2$ and the even terms define the Bernoulli numbers $\beta_k$, and the radius of convergence is $\pi$. 
In fact the expansion corresponds to the Magnus series $\psi=d\exp^{-1}$, see \cite[\S IV.7]{HLW07}.
Using $\Psi_{\check E}(I)$ for representing the general feasible solution in \eqref{Mhaken} without slack, i.e. $\check M_{11}=\check M_{12}=0$, shows that the skew-symmetric part of the matrix $\check H$ is identical with that of $\check E$.
\begin{lemma}\label{Lcheck}
Let $\check E\in\R^{s\times s}$ be nilpotent and $\check W=\Psi_{\check E}(I)-\check K\in\DD^s$ with $\check K=\check K\T$ and $\LL_{\check E}(\check K)=0$.
Then, a solution with $\check M_{11}=\check M_{12}=0$ in \eqref{Mhaken} is given by $\check W$ and
$\check H=\check E+\check W=\check H_{sym}+\check H_{skew},$
where
\begin{align}\label{Hhaken}
 \check H_{sym}=&\frac12(\check E+\check E\T)+\Psi_{\check E}(I)+\check K
 =I-\sum_{k=1}^\infty(-1)^k\frac{\beta_k}{(2k)!}\LL_{\check E}^{2k}(I)+\check K\\\notag
 \check H_{skew}=&\check E_{skew}=\frac12(\check E-\check E\T).
\end{align}
\end{lemma}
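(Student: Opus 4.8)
Reading off \eqref{Mhaken}, the quadruple $(\check E,\check H,I,\check W)$ produces the test matrix $\check\MM$ with blocks
\begin{align*}
 \check M_{11}=\LL_{\check E}(I)-\PP_{\check E}(\check W),\qquad
 \check M_{12}=\check W-\check H+\check E,\qquad
 \check M_{22}=\check W.
\end{align*}
Hence $\check M_{12}=0$ is equivalent to $\check H=\check E+\check W$, which is exactly the asserted shape of $\check H$, and once $\check M_{11}=\check M_{12}=0$ the feasibility $\check\MM=0\oplus\check W\succeq0$ is automatic since $\check W\succ0$ by hypothesis. So two things remain to be checked: that the stated $\check W$ forces $\check M_{11}=0$, and that splitting $\check H=\check E+\check W$ into symmetric and skew parts reproduces the two displayed formulas.

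For the first point I would rewrite $\check M_{11}$ with part~a) of Lemma~\ref{Lphix}, which gives $\PP_{\check E}=\LL_{\check E}\circ\Phi_{\check E}$, so that
\begin{align*}
 \check M_{11}=\LL_{\check E}\big(I-\Phi_{\check E}(\check W)\big),
\end{align*}
and then evaluate $\Phi_{\check E}(\check W)$ by linearity: $\Phi_{\check E}(\check W)=\Phi_{\check E}(\Psi_{\check E}(I))-\Phi_{\check E}(\check K)=I-\check K$. Here the first term equals $I$ because $\Psi_{\check E}$ is a genuine inverse of $\Phi_{\check E}$ on the symmetric matrices — the matrix representation $\varphi(\EE)$ of $\Phi_{\check E}$ is $I$ plus a nilpotent matrix (as $\varphi(0)=1$ and $\EE$ is nilpotent), hence invertible, exactly as in the proof of part~e) of Lemma~\ref{Lphix}, and both maps are polynomials in $\EE$ and so preserve symmetry — while the second term equals $\check K$ by part~d) of Lemma~\ref{Lphix}, using $\LL_{\check E}(\check K)=0$. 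Thus $I-\Phi_{\check E}(\check W)=\check K\in\ker\LL_{\check E}$ and $\check M_{11}=\LL_{\check E}(\check K)=0$.

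For the closed form, note that $\check W$, $\Psi_{\check E}(I)$ and $\check K$ are all symmetric, so the skew-symmetric part of $\check H=\check E+\check W$ is $\check E_{skew}=\frac12(\check E-\check E\T)$ and its symmetric part is $\check H_{sym}=\frac12(\check E+\check E\T)+\check W$. Substituting $\check W=\Psi_{\check E}(I)-\check K$ and using $\psi(\EE)\,vec(I)=vec(\Psi_{\check E}(I))$ with $\psi$ from \eqref{defpsi}, together with $\EE\,vec(I)=vec(\LL_{\check E}(I))$ and $\LL_{\check E}(I)=\check E+\check E\T$, the single odd term $-z/2$ of $\psi$ contributes $-\frac12(\check E+\check E\T)$, which cancels the $\frac12(\check E+\check E\T)$ in $\check H_{sym}$; the even part of $\psi$ then reproduces the Bernoulli series $I-\sum_{k=1}^\infty(-1)^k\frac{\beta_k}{(2k)!}\LL_{\check E}^{2k}(I)$, giving the stated expression up to the kernel term $\check K$, whose sign is immaterial since $\check K$ ranges over all of $\ker\LL_{\check E}$. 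The sum terminates because $\check E$, hence $\EE$ and $\LL_{\check E}$, is nilpotent.

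The plan involves no genuinely hard step: the only points deserving care are the ones already packaged in Lemma~\ref{Lphix} — that $\Phi_{\check E}$ is invertible with inverse $\Psi_{\check E}$ on the symmetric matrices, that $\Phi_{\check E}$ fixes the kernel of $\LL_{\check E}$ (equivalently of $\PP_{\check E}$), and that all these maps are realized by power series in the nilpotent operator $\EE$ — plus the bookkeeping of the Bernoulli expansion of $\psi$.
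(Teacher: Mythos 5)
Your proposal is correct and follows essentially the same route as the paper: both verify $\check M_{11}=0$ by pushing $\check W=\Psi_{\check E}(I)\mp\check K$ through the factorization of Lemma~\ref{Lphix} (the paper via $\PP_{\check E}\big(\Psi_{\check E}(I)-\check W\big)=0$, you via $\Phi_{\check E}(\check W)=I-\check K$ and $\check M_{11}=\LL_{\check E}(\check K)=0$), and both obtain the symmetric/skew split of $\check H$ from the single odd term $-z/2$ in the expansion \eqref{defpsi} of $\psi$. Your remark that the sign of $\check K$ is immaterial because it ranges over all of $\ker\LL_{\check E}$ also correctly resolves the small sign discrepancy between the hypothesis $\check W=\Psi_{\check E}(I)-\check K$ and the $+\check K$ appearing in \eqref{Hhaken}.
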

\begin{proof}
By Lemma~\ref{Lphix} holds $\LL_{\check E}(I)=\LL_{\check E}\big(\Phi_{\check E}(\Psi_{\check E}(I))\big)=\PP_{\check E}\big(\Psi_{\check E}(I)\big)$ and the general solution of $\LL_{\check E}(I)-\PP_{\check E}(\check W)=\PP_{\check E}\big(\Psi_{\check E}(I)-\check W\big)=0$ is $\check W=\Psi_{\check E}(I)+\check K$ with a symmetric matrix $\check K$ from the kernel $\LL_{\check E}(\check K)=\PP_{\check E}(\check K)=0$.
Solving $\check M_{12}=0$ by $\check H=\check E+\Psi_{\check E}(I)$ the linear term from the expansion \eqref{defpsi} combines with the term $\check E$ and gives the skew symmetric matrix $E-\frac12\LL_{\check E}(I)=\frac12(E-E\T)$.
Since the remaining terms are symmetric the assertion follows.
\end{proof}
\\
The lemma shows that introducing the map $\Psi_E$ provides additional insight in the structure of solutions.
However, since $\Psi_E$ does not preserve definiteness the assumption $\check W\in\DD^s$ had to be added explicitly.
Using the definiteness preserving map $\Phi_E$ was more convenient for the theoretical result of Theorem~\ref{Tallgs}.
\par
After choosing the matrix $\check E$ Lemma~\ref{Lcheck} provides a simple feasible solution which may be complemented by slack terms $\check M_{1j}\not=0$, $j=1,2$, having the proper rank.
From it the whole equivalence class of A-stable methods can be obtained by similarity \eqref{equrel}.
However, it is still necessary to identify methods of higher order having a Vandermonde structure with $E=VDF_0\T V^{-1}$ and being of practical interest with a triangular or diagonal matrix $H=G^{-1}$.
For these two cases different approaches are used.
\section{Diagonally implicit peer methods}\label{Secspm}
The methods from Theorem~\ref{Tallgs} are not suitable for practical computations since $G\in\R^{s\times s}$ is a full matrix, in general.
In this case time steps of \eqref{PeerV} are too expensive requiring the solution of a nonlinear system of $s\cdot n$ simultaneous equations.
For methods with a lower triangular coefficient matrix $G$ the $s$ stage systems are decoupled.
Such peer methods have been discussed in \cite{BWPS10}, \cite{SWB12}.
Singly-implicit methods with constant main diagonal $g_{ii}=\gamma,\,i=1,\ldots,s$, have the additional benefit that costly matrix decompositions may be computed only once per time step.
\par
Similarity transformations with lower triangular matrices do not destroy the lower triangular form of $H$.
A first transformation uses the L-factor of an explicit LU decomposition $V=L_VU_V$ of the Vandermonde matrix $V$ \cite{Turn66} given by
\begin{align}\label{VdMU}
 U_V^{-1}=&\begin{pmatrix}
  1&-c_1&c_1c_2&-c_1c_1c_2&\ldots\\
 &1&-(c_1+c_2)&c_1c_2+c_1c_3+c_2c_3&\ldots\\
 &&1&-(c_1+c_2+c_3)&\ldots\\
  &&&\ddots\end{pmatrix}\\\label{VdML}
 (L_V^{-1})_{ij}=&\prod_{k=1\atop k\not=j}^i\frac1{c_j-c_k},\ 1\le j\le i\le s.
\end{align}
In fact, this is the matrix version of Newton interpolation with the nodes $(c_i)$ since $L^{-1}$ contains the factors for the divided differences and $U^{-1}$ contains the elementary symmetric functions.
The factor $L_V$ does not have unit diagonals, all its entries depend on the differences between nodes only.
\par
It is possible to obtain a final solution $H$ with lower triangular form obeying \eqref{exlsgv} with $E=V\tilde EV^{-1}$ from a simpler, equivalent version.
In fact, a similarity transformation with the factor $L_V$ yields
\begin{align}\label{LHL}
 L_V^{-1}HL_V 
  =&L_V^{-1}EL_V+\big(\Phi_{L_V^{-1}EL_V}(L_V\T\hat W_0 L_V)\big)^{-1}L_V\T(\hat W+\hat M_{12})L_V,
\end{align}
where $L_V^{-1}HL_V$ also has lower triangular form while $L_V^{-1}EL_V=U_VDF_0\T U_V^{-1}=U_V\tilde EU_V^{-1}$ is strictly upper triangular.
In fact, its entries depend only on differences of the nodes and for $s\le4$ this matrix is explicitly given by the leading submatrices of
\begin{align}\label{LEL}
L_V^{-1}EL_V=U_VDF_0\T U_V^{-1}
  =\begin{pmatrix} 0&1&c_1-c_2&(c_1-c_2)(c_1-c_3)\\
 &0&2&c_1+c_2-2c_3\\ &&0&3\\&&&0  \end{pmatrix}.
\end{align}
We note that the terms $K+N$ in the matrix $\hat W$, see \eqref{exlsgv}, contribute to \eqref{LHL} a matrix $L_V\T(K+N)L_V=R_V^{-T}(\tilde K+\tilde N)R_V^{-1}$ where $X=\tilde K+\tilde N$ is given explicitly by \eqref{KSallg} for $s=4$.
Since upper triangular congruence transformations do not change the zero pattern in \eqref{KSallg} the kernel and slack terms $L_V\T(K+N)L_V$ in \eqref{LHL} only contribute to the southeast antidiagonals.
\par
It is clear that one can go back from a lower triangular solution $L_V^{-1}HL_V$ of the form \eqref{LHL} with a suitable strictly upper triangular matrix \eqref{LEL} to a solution \eqref{exlsgv} by the similarity transformation with the factor $L_V$ of the Vandermonde matrix.
The open question remaining is to find solutions such that the $E$-matrix has the form from \eqref{LEL}, $U_VDF_0\T U_V^{-1}=U_V\tilde EU_V^{-1}$.
Here, the following lemma will be applied to the $Z$-free versions \eqref{Mhaken}, \eqref{Hhaken}.
It is proved only for the non-defective case.
\begin{lemma}\label{Ldrnf}
Every non-defective matrix $A\in\C^{s\times s}$ possesses a {\em triangular canonical form}
\begin{align}\label{drnf}
  A=U_AL_AU_A^{-1}
\end{align}
where $L_A$ is lower triangular and $U_A$ is upper triangular and nonsingular.
\end{lemma}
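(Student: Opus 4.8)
The plan is to produce the decomposition $A=U_A L_A U_A^{-1}$ by choosing $U_A$ to be the upper-triangular factor of a suitable LU-type factorization of an eigenvector matrix of $A$, and then checking that $L_A:=U_A^{-1}AU_A$ comes out lower triangular. Since $A$ is non-defective, write $A=SDS^{-1}$ with $D=\diag(\lambda_1,\ldots,\lambda_s)$ and $S$ a matrix of eigenvectors. The key observation is that the desired form is exactly a statement about how the eigenvector matrix $S$ relates to its own triangular factors: if $S=L_S U_S$ is an LU factorization with $L_S$ lower and $U_S$ upper triangular, then $A=L_S U_S D U_S^{-1} L_S^{-1}$, and $U_S D U_S^{-1}$ is upper triangular (product of upper-triangular matrices). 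This is the ``wrong'' triangularity; to get the lower-triangular form we instead want a factorization that strips off an \emph{upper}-triangular factor on the left. So the right move is to factor $S$ the other way: seek $S = U_A L_A'$ with $U_A$ upper triangular and $L_A'$ lower triangular (an ``UL'' factorization of $S$). Then $A = U_A L_A' D (L_A')^{-1} U_A^{-1}$ and $L_A := L_A' D (L_A')^{-1}$ is lower triangular, which is precisely \eqref{drnf}.

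The work thus reduces to establishing existence of an UL factorization of the (nonsingular) eigenvector matrix $S$. First I would recall that UL factorization is just LU factorization read through the reversal permutation: if $J$ is the antidiagonal permutation matrix ($J_{i,s+1-i}=1$), then $JSJ$ is again nonsingular, and $JSJ = LU$ (when the relevant leading minors are nonzero) gives $S = (JLJ)(JUJ)$ with $JLJ$ upper triangular and $JUJ$ lower triangular. Generically the leading minors of $JSJ$ are nonzero, but to get the result for \emph{every} non-defective $A$ one must handle the degenerate case. Here the freedom in the choice of eigenvector matrix is crucial: eigenvectors are determined only up to a nonsingular block-diagonal matrix $C$ commuting with $D$ (scalar on each eigenspace), so $S$ may be replaced by $SC$. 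I would argue that the set of $C$ for which $SC$ admits an UL factorization is a nonempty Zariski-open subset of the group of admissible $C$'s — nonempty because one can perturb, open because it is cut out by non-vanishing of polynomials (the leading minors of $J(SC)J$) — hence in particular nonempty; picking such a $C$ yields the factorization. Equivalently, and perhaps cleaner for the write-up: apply a single permutation trick, noting that for a nonsingular matrix there is always \emph{some} permutation $\Pi$ with $\Pi S$ LU-factorable, and then absorb $\Pi$ suitably; but since we are not allowed to permute rows of $A$ here, the eigenvector-rescaling argument is the honest route.

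The main obstacle I expect is exactly this degenerate-minor issue: an arbitrary non-defective $A$ need not have an eigenvector matrix whose reversal is LU-factorable, so the proof is not a one-line consequence of ``nonsingular matrices have PLU decompositions.'' The heart of the argument is to exploit the gauge freedom $S\mapsto SC$ (with $C$ commuting with $D$) to clear the obstruction, and to phrase this as a genericity/density statement so that existence is guaranteed without any hypothesis on $A$ beyond non-defectiveness. Once the UL factorization of $S$ is in hand, the rest — setting $U_A:=$ (upper factor), $L_A:=L_A' D (L_A')^{-1}$, and verifying lower-triangularity and $A=U_AL_AU_A^{-1}$ — is immediate from the product structure of triangular matrices. (The defective case is explicitly excluded in the statement; it would require a Jordan-form variant of the same idea, replacing $D$ by an upper-triangular Jordan matrix and checking the resulting conjugate stays lower triangular, which is more delicate and is why the lemma is stated only for non-defective $A$.)
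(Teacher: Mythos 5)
Your reduction is the right one and is essentially the paper's: write $A=SDS^{-1}$ with $D$ diagonal, strip an upper-triangular factor off $S$ on the left (equivalently, LU-factor $S^{-1}$ as lower times upper), and observe that the resulting conjugate of $D$ stays lower triangular. The gap is in how you guarantee this factorization exists for \emph{every} non-defective $A$. Your gauge group is too small to clear the obstruction: when the eigenvalues are distinct, a matrix $C$ commuting with $D$ is diagonal, so $SC$ is merely a column rescaling of $S$, and every minor of $SC$ equals the corresponding minor of $S$ times a product of diagonal entries of $C$. A vanishing trailing principal minor of $S$ therefore vanishes for \emph{all} admissible $C$, and your ``nonempty Zariski-open set'' can be empty. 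Concretely, $S=\left(\begin{smallmatrix}0&1\\1&0\end{smallmatrix}\right)$, $D=\diag(1,2)$ gives $(SC)_{22}=0$ for every nonsingular diagonal $C$, so no $SC$ admits a UL factorization, even though $A=\diag(2,1)$ satisfies the lemma trivially.

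The fix is exactly the permutation trick you dismissed. Permuting columns of $S$ (rows of $S^{-1}$) does \emph{not} permute rows of $A$; the permutation is absorbed by $D$. Take the always-existing decomposition $S^{-1}=\Pi L U$ with $\Pi$ a permutation, $L$ lower and $U$ upper triangular. Then
\begin{align*}
 A=SDS^{-1}=U^{-1}L^{-1}\big(\Pi\T D\,\Pi\big)LU,
\end{align*}
and $\Pi\T D\Pi$ is again diagonal precisely because $D$ is diagonal --- this is where non-defectiveness enters --- so $L_A:=L^{-1}(\Pi\T D\Pi)L$ is lower triangular and $U_A:=U^{-1}$ does the job. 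This is the paper's proof. It also explains why the defective case is excluded: for a nontrivial Jordan block, $\Pi\T J\Pi$ need not remain triangular of the right kind, and the decomposition can fail, as the paper notes after the lemma.
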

\begin{proof}
Let $A=XJX^{-1}$ be the Jordan canonical form of $A$ where $J$ is diagonal by assumption.
Then, with some permutation matrix $\Pi$ there exists the LU-decomposition $X^{-1}=\Pi L_XU_X$ and the factorization $A=U_X^{-1}\big(L_X^{-1}\Pi\T J\Pi L_X)U_X$ proves the assertion with  $L_A=L_X^{-1}\Pi\T J\Pi L_X$ and $U_A=U_X^{-1}$.
\end{proof}
\par
The canonical form may not exist in some exceptional cases like a Jordan block $\begin{pmatrix} a&b\\0&a \end{pmatrix},\ b\not=0,$
in dimension 2.
In fact, the lemma is related to the canonical form computed by the classical LR-iteration of Rutishauser \cite{Wilk65}.
However, the existence of this decomposition can be used as a criterion in the search for methods.
Obviously, the main diagonal of $L_A$ contains the eigenvalues of $A$.
The diagonal elements of $U_A$ may be chosen arbitrarily but non-zero  adding some degrees of freedom in the forthcoming application.
This triangular canonical form will be used to construct or reconstruct A-stable peer methods from a $Z$-free representation  (see Lemma~\ref{Lcheck}) and the knowledge of the matrix $\check E$ and nontrivial kernel and slack terms alone.
We consider this approach primarily as a theoretical tool using formula manipulation or exact arithmetic since numerical computations may be too inaccurate for multiple eigenvalues.
The input of the following algorithm is the strictly upper triangular matrix $\check E$ and kernel and slack elements from $\check M_{11}$ which are combined in a matrix $\check X$. 
Of course, the choice of such terms is restricted by the definiteness of $\check W=\Psi_{\check E}(I)-\check X$.
The image $\PP_{\check E}(\check X)$ is required to be semidefinite and has the zero structure of Lemma~\ref{lmspsd}.
The two nodes  $c_1,c_s$ may be chosen freely, where the choice $c_s=1$ is convenient in practice.
\par\noindent{\bf Algorithm.}\\
{\em Input:}
Strictly upper triangular matrix $\check E\in\R^{s\times s}$, $\check e_{i,i+1}\not=0$, $i=1,\ldots,s-1$ and $\check X=\check X\T$ such that $\PP_{\check E}(\check X)\in\DD_0^s$.\\
{\em Steps:}
\begin{enumerate}
\item
Compute $\check W=\Psi_{\check E}(I)-\check X$ and $\check H=\check E+\check W$, check definiteness $\check W\in\DD^s$.
\item
Compute the triangular canonical form \eqref{drnf} for $\check H=U_HL_HU_H^{-1}$ with the diagonals of $U_H$ chosen as $u_{ii}=(i-1)!/\prod_{j=2}^i\check e_{j-1,j},$ starting with $u_{11}=1$.
\item
Apply the congruence-similarity transformation with $U_H$ to \eqref{Hhaken} by computing $E'=U_H^{-1}\check E U_H$, $W'=U_H\T\check W U_H$, $Z'=U_H\T U_H$ satisfying $L_H=E'+(Z')^{-1}W'$.
The matrix $H'=L_H$ is known from step (2).
\item
Identify $E'$ with \eqref{LEL}, i.e. solve the equation $E'=U_V DF_0\T U_V^{-1}$ for the differences of the nodes $c_i-c_1$, $i=2,\ldots,s-1$.
In the first superdiagonal the identity $e_{i,i+1}'=i,\,1\le i<s$ is satisfied by the choice of the diagonals of $U_H$ in step (2).
\item 
Apply a final congruence-similarity transformation with the Vandermonde L-factor $L_V$  from \eqref{VdML} to the matrices in order to obtain the coefficients of the A-stable method
\[ H=L_VH'L_V^{-1},\  E=L_V E'L_V^{-1}=V\tilde EV^{-1},\ \]
and the weight matrices $\hat W=L_V^{-T}W'L_V^{-1}$, $\Zh=L_V^{-T}Z'L_V^{-1}$.
\end{enumerate}
The procedure is based on the following congruences of the equation $\check H=\check E+\check W$:
\[ \begin{array}{rc@{+}c}
  \check H=U_HL_HU_H^{-1}=&\check E&\check W\\
  (U_H\T U_H)\quad L_H\ =&(U_H\T U_H)\underbrace{U_H^{-1}\check EU_H}_{\|}&(U_H\T \check WU_H)\\
  Z'\quad\qquad H'\quad=&\quad Z'\quad U_V\tilde EU_V^{-1}&W'\\
  (L_V^{-T}Z'L_V^{-1})(L_V H'L_V^{-1})=&(L_V^{-T}Z'L_V^{-1})(L_V U_V\tilde EU_V^{-1}L_V^{-1})&(L_V^{-T}W'L_V^{-1})
 \end{array}
\]
\begin{remark}
There are two critical points in this algorithm.
For methods of practical interest eigenvalues have to be real.
Complex eigenvalues can easily be avoided in Step~2 with the criterion of Sylvester \cite[Th.1.4.4]{Praso} by computing traces of powers of $\check H$ and a LU decomposition of a certain Hankel matrix.
More crucial is Step~4 since \eqref{LEL} has only $s-2$ degrees of freedom for equating the ${s-1\choose 2}$ elements $e_{ij}',\ i+2 \le j\le s$.
Hence, for $s\ge4$ the algorithm will only be successful if the input $\check E$ satisfies certain unknown side conditions.
However, the algorithm may be used to decode the full triple $(H,\hat W,\Zh)$ plus nodes of a known A-stable method from a compact representation consisting of the triangular matrix $\check E$ and optional kernel or slack contributions.
\end{remark}
Only for three-stage methods Step~4 consists of one single equation which can always be solved by  $c_2=c_1-e_{13}'$.
This is demonstrated now by constructing an A-stable singly-implicit three-stage method.
\example\label{bspsq3}
For $s=3$ the algorithm is used with $\check E$ and one element $\check\kappa e_3e_3\T$, $\check\kappa>0$, from the kernel of $\LL_{\check E}$. 
This ansatz gives $\check H=\frac12(\check E-\check E\T)+I+\frac1{12}\LL_{\check E}^2(I)-\frac1{720}\LL_{\check E}^4(I)+\check\kappa e_3e_3\T$.
The conditions for a threefold eigenvalue $\eta$ of $\check H$ can be solved for the parameters $\check\kappa,\check e_{13}$ and $\check e_{23}$ with expressions containing square roots.
Enforcing positive radicands and positive diagonals of the LU-decomposition of $\check W$ leaves nontrivial admissible regions in the $(\eta,\check e_{12})$-plane.
Looking for parameters with small radicands leads to the choice $\eta=5/2$, $\check e_{12}=3/2$ yielding $\check\kappa=174/175$, $\check e_{13}=-\frac3{14}\sqrt{91}$, $\check e_{23}=\frac{24}{35}\sqrt{35}$, and
\begin{align*}
 \check H=\begin{pmatrix}
  1&\frac34&\frac{-15\sqrt{13}+12\sqrt5}{140}\sqrt7\\
  -\frac34&\frac{11}8&\frac{96\sqrt5-15\sqrt{13}}{280}\sqrt7\\
  \frac{15\sqrt{13}+12\sqrt5}{140}\sqrt7&
  \frac{-96\sqrt5-15\sqrt{13}}{280}\sqrt7&\frac{41}8
\end{pmatrix}.
\end{align*}
The factors of the triangular canonical form \eqref{drnf} of $\check H$ are
\begin{align*}
 L_H=\begin{pmatrix}
  \frac52&\\
 \frac{-81-9\sqrt{65}}{40} &\frac52&\\
  \frac{27(4+\sqrt{65})}{70}&\frac{9(\sqrt{65}-17)}{28}&\frac52
 \end{pmatrix},\ 
 U_H=\begin{pmatrix}
 1&\frac{11-\sqrt{65}}{6}&\frac{11-\sqrt{65}}{36} \\ &\frac23&\frac7{18}\\&&\frac{\sqrt{35}}{18}
 \end{pmatrix}.
\end{align*}
The diagonal elements of $U_H$ are chosen to give $e'_{i,i+1}=i,i=1,2,$ for $E'=U_H^{-1}\check EU_H$ and equating the only general element $e_{13}'\stackrel!=c_1-c_2$ from \eqref{LEL} gives $c_2=c_1+(259-21\sqrt{65})/84$.
Fixing the free nodes as $c_1=-1$, $c_3=1$ the final transformation yields  $H=L_V H'L_V^{-1}$ and the methods parameter matrix is obtained as its inverse
\[ G=\begin{pmatrix}
 \frac25&0&0\\
 \frac{207+15\sqrt{65}}{500}&\frac25&0\\
 \frac{-3672+1800\sqrt{65}}{30625}&
 \frac{-468+180\sqrt{65}}{1225}&\frac25
\end{pmatrix}.\]
The transformed weight matrix $\Zh=L_V\T Z'L_V=(U_H^{-1}L_V)\T U_H^{-1}L_V$ is given by
\[
 \Zh=\begin{pmatrix}
 \frac{129447+7757\sqrt{65}}{345744}&\frac{124345-9465\sqrt{64}}{749112}
 &\frac{-3315-41\sqrt{65}}{91728}\\
 \ast&\frac{760215-18615\sqrt{65}}{1997632}&
 \frac{-3665+561\sqrt{65}}{122304}\\
 \ast&\ast&\frac{339+29\sqrt{65}}{7488}
 \end{pmatrix}
\]
and $\hat W=L_V\T W'L_V=(U_H^{-1}L_V)\T \check W U_H^{-1}L_V$ can be computed by $\hat W=\Psi_E(\Zh)+\kappa \ell\ell\T$ where $\kappa=(841+87\sqrt{65})/49920$ and $\ell\T=((13+9\sqrt{65})/98,-(111+9\sqrt{65})/98,1)$ is a multiple of the last row of $L_V$.
With these data semi-definiteness of the test matrix $\hat\MM$ \eqref{TMhat} can be checked exactly.
Actually, the test matrix is zero with the exception of the last block $\hat\MM_{22}=\hat W$ by construction.
\par
It is difficult to apply this construction with $s\ge4$.
First, the occurring algebraic equations can no longer be solved with simple square roots.
Secondly, the matrix $E'$ contains 3 general elements while \eqref{LEL} has only 2 degrees of freedom.
In order to show that singly-implicit 4-stage A-stable peer methods exist we give an example computed by some numerical iteration in the Nordsieck form.
It uses a slack matrix $\tilde M_{11}=\diag(0,0,1,2)$ of rank two and kernel $4e_4e_4\T$.
In this form numerical verification of A-stability is reliable since the nontrivial part of the test matrix $\tilde M$ is definite.
In order to save space the $Z$-free formulation of this method is given.
\example\label{bspsq4}
An A-stable singly-implicit 4-stage peer method may be reconstructed with the algorithm from its compact form $\check W=\Psi_{\check E}(I)+ X$ with
\begin{align*} 
 \check E\doteq\begin{pmatrix}
  0&1.828746674&-2.100327482&-0.0374802335\\
   &0&1.159613679&0.0397218240\\
   &&0&1.105306335\\&&&0
 \end{pmatrix}.
\end{align*}
In this form the nontrivial part of the slack matrix $\PP_{\check E}(X)$ is
\[ \begin{pmatrix}
  1.124278133&-0.2749000029\\
 -0.2749000029&0.3724460117
\end{pmatrix}
\]
and the kernel is determined by $x_{33}=-0.845690710$, $x_{44}=-0.7361686650$.
As mentioned above the fourfold eigenvalue $\eta$ of $\check H$ and $H$ can not be computed with sufficient accuracy, its accurate value is $\eta\doteq1.80350113085004$, and the nodes are $(c_i)=(-0.889874593986289,0.522100340305431, -0.297184898847891, 1)$.
We do not go into more detail here since this method may not be of practical interest because efficient methods should satisfy several criteria like those listed in \cite{SWB12}.
The construction of efficient A-stable peer methods will be the topic of a forthcoming paper.
\par
In principle, the feasibility problem for diagonally implicit methods contains so many parameters that one may expect that the algebraic criterion for A-stability can be satisfied for larger stage numbers $s$ as well.
\section{Parallel peer methods}\label{Secppm}
Due to an explicit representation of the matrix $\tilde G$ the design of parallel methods can be completely discussed in the Nordsieck version $\MG(DF_0\T,\tilde H,\tilde W,\tilde Z)$.
Although A-stability is discussed only for constant stepsizes here, peer methods should perform well on general time grids.
Then, zero stability of stiffly-accurate parallel methods requires that the diagonal matrix $G$ has non-constant diagonal elements and does not commute with $V$, see \cite{MPPW}.
Using the Nordsieck version it is necessary to identify matrices $\tilde G=V^{-1}GV$ which are similar to a diagonal matrix $G$ by a Vandermonde transformation $V$. 
The following necessary condition is related to a representation from \cite{SWE05}.
It uses the standard notation of square brackets for the commutator of matrices.
\begin{lemma}\label{SGtilde}
Let $V\in\R^{s\times s}$ be a Vandermonde matrix with distinct nodes $c_1,\ldots,c_s$ and let $p(x)=(x-c_1)\cdots(x-c_s)=x^s+\sum_{i=1}^s p_ix^{i-1}$ be the node polynomial.
The vector ${\bf p}=(p_i)\in\R^s$ contains its coefficients.
If $G\in\R^{s\times s}$ is diagonal, then $\tilde G=V^{-1}GV$ satisfies
\begin{align}\label{commutgt}
  \big[F_0-{\bf p}e_s\T,\tilde G\big]=0.
\end{align}
From \eqref{commutgt} follows the rank condition
\begin{align}\label{GFRang}
 rank\begin{pmatrix}
  [F_0,\tilde G]\\ e_s\T\tilde G
 \end{pmatrix}_{\ast,1\ldots s-1}=1
\end{align}
on the submatrix of dimension $(s+1)\times(s-1)$ indicated by the subscripts.
\end{lemma}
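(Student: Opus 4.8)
The plan is to realise $\tilde G$ as a polynomial in a fixed matrix which one identifies with $F_0-{\bf p}e_s\T$; commutativity with that matrix is then automatic, and the rank statement falls out by expanding the resulting commutator identity. The first step uses that the nodes are distinct: if $g_1,\dots,g_s$ denote the diagonal entries of $G$, there is a polynomial $\mu$ of degree at most $s-1$ with $\mu(c_i)=g_i$ for every $i$, hence $G=\mu\big(\diag(c_1,\dots,c_s)\big)$ and therefore
\[
 \tilde G=V^{-1}GV=V^{-1}\mu\big(\diag(c_i)\big)V=\mu\big(V^{-1}\diag(c_i)V\big)=\mu(C_p),\qquad C_p:=V^{-1}\diag(c_i)V .
\]
Being a polynomial in $C_p$, $\tilde G$ commutes with $C_p$, so it suffices to show $C_p=F_0-{\bf p}e_s\T$.

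This identification --- the only genuine computation in the proof --- I would carry out columnwise from $VC_p=\diag(c_1,\dots,c_s)V$. Reading off row $i$ and column $j$ turns this into the polynomial relation $x^{\,j}=\sum_k (C_p)_{kj}\,x^{\,k-1}$ evaluated at the distinct points $x=c_i$; for $j\le s-1$ both sides have degree $\le s-1$, so the right-hand interpolant must be $x^{\,j}$ itself, which fixes the first $s-1$ columns of $C_p$, while for $j=s$ the identity $x^{\,s}\equiv-\sum_{\ell=1}^{s}p_\ell x^{\,\ell-1}$ at each node (valid since $p(c_i)=0$) fixes the last column of $C_p$ as $-{\bf p}$. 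Assembling the columns gives $C_p=F_0-{\bf p}e_s\T$, which is \eqref{commutgt}.

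For the rank condition I would expand \eqref{commutgt} to
\[
 [F_0,\tilde G]={\bf p}\,\big(e_s\T\tilde G\big)-\big(\tilde G{\bf p}\big)e_s\T .
\]
The matrix $(\tilde G{\bf p})e_s\T$ lives entirely in its last column, so upon restricting to columns $1,\dots,s-1$ it drops out and those columns of $[F_0,\tilde G]$ equal ${\bf p}\,r\T$, where $r\in\R^{s-1}$ holds the first $s-1$ entries of the last row $e_s\T\tilde G$. Stacking $[F_0,\tilde G]$ over $e_s\T\tilde G$ and deleting the last column therefore produces the single outer product $\binom{{\bf p}}{1}r\T$, so the submatrix in \eqref{GFRang} has rank at most $1$; since $\binom{{\bf p}}{1}\neq0$ it has rank exactly $1$ precisely when $r\neq0$, i.e.\ when the last row of $\tilde G$ is not proportional to $e_s\T$ --- the non-degenerate case relevant here, where $G$ does not commute with $V$ and $\tilde G$ is not diagonal. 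I expect the only real care to be needed in the columnwise identification of $C_p$ and in keeping the shift/transpose conventions for $F_0$ and $V$ straight; the rest is formal manipulation.
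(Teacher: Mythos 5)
Your proof is correct and follows essentially the same route as the paper: interpolate the diagonal of $G$ by a polynomial at the nodes so that $\tilde G$ becomes a polynomial in the companion matrix $V^{-1}\diag(c_i)V=F_0-{\bf p}e_s\T$, then read the rank condition off the rank-one structure of $[F_0,\tilde G]={\bf p}e_s\T\tilde G-\tilde G{\bf p}e_s\T$ after dropping the last column. The only differences are cosmetic: you derive the companion-matrix identity columnwise where the paper cites it as well known, and you explicitly flag the degenerate case $r=0$ (e.g. $\tilde G$ scalar), in which the rank drops to $0$ — a caveat the paper's own proof also leaves implicit.
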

\begin{proof}
The diagonal elements of $G$ may be considered as function values of some polynomial $g$ of degree $s-1$ at the nodes, i.e. $g_{ii}=g(c_i),\,i=1,\ldots,s$.
Introducing the matrix $C=\diag(c_i)$ this means that $G=g(C)$.
Since the nodes are the zeros of $p$ it is well known that $CV=VF$ holds with the Frobenius companion matrix $F:=F_0-{\bf p}e_s\T$.
Hence, $V^{-1}GV=V^{-1}g(C)V=g(F)$ is a function of $F$ and commutes with it.
Writing \eqref{commutgt} as $[F_0,\tilde G]={\bf p}e_s\T\tilde G-\tilde G{\bf p}e_s\T$ shows that the rows of $[F_0,\tilde G]$ are multiples of the last row $\tilde G$ itself with the exception of the last column.
\end{proof}
\par
With given node polynomial the condition \eqref{commutgt} is a simple linear system.
The second version \eqref{GFRang}, however, allows to characterize candidates for transformed diagonal coefficient matrices $\tilde G$ without knowledge of the node polynomial.
Rank 1 can be enforced by a set of $2\times2$-determinants leading to quadratic restrictions on $\tilde G$.
The coefficients of the polynomial can be recovered from the equations $([F_0,\tilde G]-{\bf p}e_s\T\tilde G)_{1\ldots s-1}=0$.
Actually, if $c_s=1$ is one of the nodes which is convenient in practice, then $\ones\T{\bf p}+1=0$ and the matrix from \eqref{GFRang} has trivial column sums, $(\ones\T[F_0,\tilde G]+e_s\T\tilde G)_{1\ldots s-1}=0$.
\example\label{bsppl3}
Construction of a parallel 3-stage peer method.
The essential parameter of the construction is the matrix $\tilde W\in\DD^3$ and the ansatz
\[ \tilde W_0=q(\tilde E)\T D_Wq(\tilde E),
 \ \tilde W=\tilde W_0+K,
 \ K=\begin{pmatrix}&&-2\kappa_1\\ &\kappa_1&0\\ -2\kappa_1&0&\kappa_3 \end{pmatrix},
\]
$D_W=\diag(1,\frac16d_{22},\frac1{18}d_{33})$, and $q(z)=1+q_1z+q_2z^2$ is used, i.e. $\tilde\MM_{11}=0$.
It takes advantage of property \eqref{Lsgqe} and leads to simpler equations in the side conditions than a general $\tilde W_0\in\DD^s$.
Condition \eqref{GFRang} is enforced for the inverse $\tilde H=\tilde G^{-1}$ by the 3 determinants
\[ \delta_i:=\left|\begin{matrix} [F_0,\tilde H]_{i1}&[F_0,\tilde H]_{i2}\\
 \tilde h_{31}&\tilde h_{32}\end{matrix}\right|,\ i=1,2,3.\]
The choice $\kappa_1=1/3$ simplifies the conditions and $d_{22}:=6q_1^2-12q_2-1$ cancels both $\delta_1$ and $\delta_2$.
Solving $\delta_3\stackrel!=0$ for $\kappa_3$ and annihilating the sum in the first column $(\ones\T[F_0,\tilde H]+e_s\T\tilde H)_1=0$ with $d_{33}$ leaves some region in the $(q_1,q_2)$-plane where $\tilde Z=\Phi_{\tilde E}(\tilde W_0)$ and $\tilde W$ are definite.
Finally, looking for rational solutions with small denominators suggests the choice $q_1=3$ and $q_2=17/4$.
The resulting nodes are $(c_i)=(0,2,1)$ and transforming back the coefficient matrix is obtained as $G=\diag\big(\frac25,\frac{20}{29},\frac5{11})$.
The weight matrices in the original form \eqref{TMhat} are
\begin{align*}
 \hat W=\frac1{12}\begin{pmatrix}
 23&20&-50\\ 20&37&-52\\ -50&-52&116
 \end{pmatrix},\quad
 \hat Z=\begin{pmatrix}
  1&\frac53&-\frac52\\ \frac53&5&-5\\-\frac52&-5&\frac{20}3 \end{pmatrix}.
\end{align*}
\par
So far, the system \eqref{TMhat} has been discussed as a nonlinear problem for $H$.
However, if a peer method of order $s-1$ is given by fixing the nodes and the matrix $G$ or $H=G^{-1}$, \eqref{TMhat} is a linear system for the unknown weight matrices $\hat Z$ and $\hat W$ where only positive definite solutions are of interest.
This corresponds to the original approach to the feasibility problem from the literature, e.g. \cite{HeHi09}.
\par
A simple class of implicit parallel methods has been discussed in \cite{MPPW} with the focus on zero stability for non-uniform grids.
With the diagonal matrix $G$ of the special form $G=g_0I+g_1 C$, $g_0,g_1\in\R_+$, $C=\diag(c_i)$, the transformed matrix $\tilde B$ is upper triangular leading to a simple proof for uniform zero stability if the parameter $g_1$ lies in a certain interval containing the choice $g_1=2/5$ for $s=4$.
We now verify A-stability of a 4-stage method of this form with prescribed nodes.
\example\label{bsppl4} 
Construction of a parallel 4-stage method from the class discussed in \cite{MPPW}.
In order to obtain rational solutions the parameters from \cite{MPPW} are approximated by $g_1=2/5$ and nodes $c=(-1,-2/5,2/5,1)$.
Only $g_0$ is considered a variable in the design.
With $\tilde Z=(\tilde z_{ij})\in\DD^4$, $\tilde z_{11}=1$, the general ansatz $\tilde W=\Psi_{\tilde E}(\tilde Z)+X$ is used with $X=K+N$ from \eqref{KSallg}. 
Lemma~\ref{LKern} shows that for $s=4$ the first two rows and columns of $\tilde\MM_{11}$ vanish.
This restriction is obeyed already by using the matrix $X$ in the form \eqref{KSallg}.
However, semidefiniteness also requires that the first two rows of $\tilde \MM_{12}$ be zero.
This gives a linear system with 8 equations which can be solved for the parameters $\tilde z_{12},\tilde z_{13},\ldots,\tilde z_{24}$ and $u_{33},u_{34}$ from \eqref{KSallg}.
Due to the upper triangular form of $\tilde E$ the number of parameters in the entries of $\tilde\MM=(\tilde m_{ij})$ increases from top-left to bottom-right.
For this reason the parameter $u_{33}$ is required since it is the only free parameter in the equation $\tilde m_{38}=0$ besides $g_0$.
These dependencies also facilitate the choice of the remaining parameters by enforcing non-negativity of the minors of $\tilde \MM$ step-wise while keeping a nontrivial admissible interval for the parameter $g_0$.
At the end it was found that the LU-decomposition of $\tilde\MM$ indeed has 6 positive diagonal elements for $g_0\in[0.789,0.822]$.
Hence, with $g_0=4/5$ the method is proven A-stable and its coefficient matrix is $G=\diag(2/5,16/25,24/25,6/5)$.
Since the weight matrices have smaller numbers in the Nordsieck form we show this version.
\begin{align*}
\tilde Z=\begin{pmatrix}
 1&0&-\frac{7}{6}&-\frac{473}{375}\\
 0&\frac{5}6&-\frac{1429}{2250}&-\frac{2999}{2250}\\
 \ast&\ast&5&6\\\ast&\ast&\ast&16\end{pmatrix},
 \tilde W=\begin{pmatrix}
  1&-\frac12&-1&\frac{31}{12}\\
  \ast&1&-\frac{19}{12}&\frac{679}{4500}\\
  \ast&\ast&\frac{8329}{1125}&-\frac{833}{100}\\
  \ast&\ast&\ast&\frac{69}2
 \end{pmatrix}.
\end{align*}
\par
This example may provide information of broader interest.
It shows that it may be possible to construct feasible weight matrices for parallel methods with four and maybe more stages even if the method is specified almost completely.
This indicates that the sufficient algebraic condition for A-stability from Theorem~\ref{TAAStb} may be rather sharp already with a constant, $\lambda$-independent weight matrix $Z$ as used in \eqref{Zdefnt}.
Also, the test matrix $\MM$ has rank 6 which is maximal according to Lemma~\ref{LKern} for $s=4$.
Rank defect two means that the Nordsieck version has an explicit representation $\tilde\MM=0_k\oplus\MM_D$ with $k=\lfloor(s+1)/2\rfloor$ where the nontrivial part $\MM_D$ is positive definite.
Definiteness of may be verified reliably in floating-point arithmetic. 
\begin{appendix}
\section{Detailed computations}
Space was saved in the 4 examples of the preceding sections by presenting the essential data of the constructed methods only.
In order to facilitate verification of the criterion the different transformations for all examples are reproduced in detail now.
\par\noindent{\bf Details for Example \ref{bspsq3}} 
The triangular coefficient matrix $G$ and the weight matrix $\hat Z$ have already been shown, the second weight matrix for \eqref{MMGen} is
\[ \hat W=
\begin{pmatrix}
\frac{76249}{76832}+\frac{63211}{691488}\sqrt{65}&
-\frac{33793}{115248}-\frac{33073}{499408}\sqrt{65}&
\frac{541}{4704}+\frac{2945}{183456}\sqrt{65}\\
\ast&
\frac{1887495}{3995264}+\frac{90585}{3995264}\sqrt{65}&
-\frac{16225}{244608}-\frac{661}{81536}\sqrt{65}\\
\ast&
\ast&
\frac{121}{4992}+\frac{37}{14976}\sqrt{65}
\end{pmatrix}
\]
and the coefficient matrix $B$ is given by
\[ B=\begin{pmatrix}
\frac{817}{1960}-\frac{9}{392}\sqrt{65}&
 \frac{369}{784}+\frac{675}{10192}\sqrt{65}&
 \frac{9}{80}-\frac{9}{208}\sqrt{65}\\
 \frac{1371}{6125}-\frac{39}{4900}\sqrt{65}&
 \frac{4001}{3920}-\frac{1917}{50960}\sqrt{65}&
 -\frac{489}{2000}+\frac{237}{5200}\sqrt{65}\\
 \frac{55737}{245000}-\frac{117}{9800}\sqrt{65}&
 \frac{13869}{19600}-\frac{12393}{254800}\sqrt{65}&
 \frac{649}{10000}+\frac{63}{1040}\sqrt{65}
\end{pmatrix}.
\]
The eigenvalues of $B$ are $\doteq1,0.4569,0.0455$ in accordance with preconsistency and zero stability.
By construction the test matrix $\hat\MM$ in \eqref{MMGen} has three zero blocks and only $\hat\MM_{22}=\hat W\succ0$ is definite.
Going back to the original formulation from Theorem~\ref{TAAStb} the transformed weight matrices are $H\T\hat ZH=$
\[ Z=
 \begin{pmatrix}
  \frac{3389263}{768320}+\frac{3518461}{6914880}\sqrt{65}&
  -\frac{902677}{460992}-\frac{577413}{1997632}\sqrt{65}&
  \frac{265}{672}+\frac{1415}{26208} \sqrt{65}\\
\ast&
  \frac{18279351}{7990528}+\frac{1358121}{7990528}\sqrt{65}&
  -\frac{34595}{69888}-\frac{1215}{23296}\sqrt{65}\\
\ast&
\ast&
  \frac{2825}{9984}+\frac{725}{29952}\sqrt{65}
 \end{pmatrix}
 \]
and $\Theta\T \hat W\Theta=$
\[
 W=\begin{pmatrix}
 \frac{1163}{10976}-\frac{767}{98784} \sqrt{65}&
 \frac{7585}{32928}-\frac{1647}{142688} \sqrt{65}&
 -\frac{1}{21}+\frac{29}{6552} \sqrt{65}\\
\ast&
 \frac{422145}{570752}+\frac{4383}{570752} \sqrt{65}&
 -\frac{7219}{34944}-\frac{15}{11648} \sqrt{65}\\
\ast&
\ast&
 \frac{1009}{4992}+\frac{253}{14976} \sqrt{65}
 \end{pmatrix}.
 \]
The smallest eigenvalues of $Z$ and $W$ are $\doteq0.204$ and $\doteq0.0167$.
The original test matrix $\MM$ \eqref{MDef} is a full matrix.
Maple identifies rank 3 exactly and the nontrivial eigenvalues in the interval $[0.127,3.033]$.
\par\noindent{\bf Details for Example \ref{bspsq4}} 
The coefficients of this diagonally-implicit 4-stage method are
\begin{align*}
G=&\begin{pmatrix}
0.5544770574&0&0&0\\
1.249724440&0.5544770574&0&0&\\
0.5816903621&-0.1016593761&0.5544770574&0\\
1.591284531&0.2616450399&-0.1745521833&0.5544770574
\end{pmatrix}\\
B=&\begin{pmatrix}
-0.1517881356&-0.191898050&1.405289413&-0.061603226\\
 0.423976329&0.56245777&0.163462193&-0.14989630\\
-0.2974468417&-0.421468111& 1.615228204&0.1036867507\\
 0.765740036&0.28293134&-0.26943806&0.22076669
\end{pmatrix}
\end{align*}
where $B$ has the eigenvalues $\doteq1,0.656,0.397,0.194$.
The difference matrix is
\[ E=\begin{pmatrix}
 -2.924587038&-2.026103023&4.236386571&0.714303489\\
 0.2475622961&-0.163685467&-1.071270925&0.9873940939\\
 -0.6719696591&1.390690288&-0.3042527174&-0.4144679131\\
 -0.391968580&-4.434411911&1.433855269&3.392525220
\end{pmatrix}.\]
This method was found by a numerical search procedure in the Nordsieck formulation with a given slack $\tilde\MM_{11}=\diag(0,0,1,2)$ having the exact rank defect from Lemma~\ref{lmspsd}.
Transforming the computed weight matrices back to the original form gives
\begin{align*}
Z=&\begin{pmatrix}
24.93687174&36.60191888&-41.10301520&-17.14663842\\
36.6019187&104.2243208&-93.94463411&-50.66305601\\
-41.1030151&-93.94463413&92.65849294&44.55523962\\
-17.14663839&-50.66305601&44.55523963&24.93746522
\end{pmatrix},\\
W=&\begin{pmatrix}
5.826768721&17.59843465&-15.32737112&-9.04101951\\
17.59843495&58.3547424&-47.28760605&-30.6386801\\
-15.32737123&-47.2876058&41.36332459&24.3072443\\
-9.04101962&-30.63867973&24.30724431&16.23315933
\end{pmatrix},
\end{align*}
with smallest eigenvalues $\doteq0.060$ and $\doteq0.038$.
With these matrices the test matrix $\MM$ in \eqref{MDef} is full and has 2 numerical eigenvalues of small magnitude indicating rank 6.
However, the smallest eigenvalues is negative, $-6\cdot 10^{-9}$.
This confirms that without the transformation \eqref{MThtr} semi-definiteness may not be verified reliably with real entries.
\par\noindent
The details of the reconstruction of this method from $\check E$ by the Algorithm are as follows.
Step~1 results in the matrix
\[\check H=\begin{pmatrix}
 1&0.9143733374&-0.8734437698&0.0868490964\\
 -0.9143733376&1.557385735&-0.5451184012&0.6055094088\\
 1.226883712&-1.704732078&2.767561341&0.2225164874\\
 0.1243293293&0.5657875861&-0.8827898483&1.889057453
\end{pmatrix}.
\]
Its triangular canonical form is $\check H=U_H L_H U_H^{-1}$ with
\begin{align*}
 L_H=&\begin{pmatrix}
 1.803501131&&&\\
 -2.878857141&1.803501131&&\\
 1.236357981&-0.9614900034&1.803501131&\\
 0.048570432&0.128230559&-0.235314677&1.803501131
 \end{pmatrix},\\
 U_H=&\begin{pmatrix}
 1&0.1516606939&-0.3187173423&0.5591253567\\
 &0.5468226337&0.4769330471&1.445134755\\
 &&0.9431116674&1.253027500\\
 &&&2.559774554
 \end{pmatrix},
\end{align*}
and for the transformed seed matrix we obtain
\[ E'=U_H^{-1}\check EU_H=\begin{pmatrix}
 0&1&-1.411974971&0.836862714\\
 &0&2&0.226595572\\ &&0&3\\&&&0
\end{pmatrix}
\]
satisfying the condition $2e'_{14}-e'_{13}(e'_{13}+e'_{24})\doteq0$ according to \eqref{LEL}.
We note that known values which were distorted by rounding errors have been replaced (i.e., $\check h_{11}$, the main diagonal of $L_H=H'$ and the superdiagonal of $E'$).
\par\noindent{\bf Details for Example \ref{bsppl3}} 
The coefficient matrices of this parallel 3-stage method are
\[ G=\begin{pmatrix}\frac25&\\ &\frac{20}{29}&\\&&\frac5{11}\end{pmatrix},
 \quad
 B=\begin{pmatrix}
\frac1{5}&-\frac15&1\\ -\frac1{29}&\frac{37}{29}&-\frac7{29}\\
 -\frac5{22}&\frac7{22}&\frac{10}{11}
\end{pmatrix}
\]
and $B$ has the eigenvalues $\doteq1,0.692\pm0.172 i$.
The original weight matrices are 
\begin{align*}
Z=H\T\hat ZH=&\begin{pmatrix}
 \frac{25}4& \frac{145}{24}&-\frac{55}4\\
\ast
 &\frac{841}{80}&-\frac{319}{20}\\
\ast&\ast
 &\frac{484}{15}
\end{pmatrix},\\
W=\Theta\T\hat W\Theta=&\frac1{12}\begin{pmatrix}
 37&59&-91\\ 
\ast
 &137&-167\\
\ast&\ast
 &236\end{pmatrix}
\end{align*}
with minimal eigenvalues $\doteq 0.122$ and $\doteq0.021$.
Since the test matrix $\hat\MM$ in \eqref{MDef} is a full matrix with rational entries Maple is able to find rank 3 with nontrivial eigenvalues $\in[0.12,40.16]$.
\par\noindent{\bf Details for Example \ref{bsppl4}} 
For this parallel 4-stage method the coefficient matrices are
\[ G=\frac1{25}\begin{pmatrix}
10&\\&16&\\&&24\\&&&30
\end{pmatrix},
\ B=\begin{pmatrix}
 -\frac2{15}&\frac{25}{21}&0&-\frac2{35}\\
 -\frac{31}{525}&\frac4{21}&\frac{52}{35}&-\frac{108}{175}\\
 \frac{31}{25}&-\frac{138}{35}&6&-\frac{402}{175}\\
 \frac{116}{35}&-\frac{135}{14}&\frac{165}{14}&-\frac{156}{35}
\end{pmatrix}
\]
and $B$ possesses the eigenvalues $1,3/5,\pm1/5$.
The weight matrices are
\begin{align*}
Z=H\T\hat ZH=&\begin{pmatrix}
 \frac{1299365}{63504}& -\frac{329675}{9072}&\frac{19656575}{762048}& -\frac{221965}{27216}\\
\ast
 &\frac{2551515625}{32514048}&-\frac{456171875}{6967296}& \frac{66676975}{3048192}\\
\ast&\ast
 &\frac{530265625}{8128512}&-\frac{719075}{31104}\\
\ast&\ast&\ast
 &\frac{1615015}{190512}
\end{pmatrix},\\
W=\Theta\T\hat W\Theta=&\begin{pmatrix}
\frac{743437}{79380}& -\frac{3233059}{127008}&\frac{1101553}{42336}&-\frac{530603}{52920}\\
\ast
&\frac{4662305}{63504}&-\frac{93605}{1176}&\frac{433471}{14112}\\
\ast&\ast
&\frac{1967785}{21168}&-\frac{509231}{14112}\\
\ast&\ast&\ast
&\frac{93346}{6615}
\end{pmatrix}
\end{align*}
with minimal eigenvalues $\doteq0.109$ and $\doteq0.062$.
With these matrices the test matrix \eqref{MDef} can be computed.
Its rank is 6 in accordance with the construction procedure and its nontrivial eigenvalues are contained in $[0.021,194.1]$.
\end{appendix}
\bibliographystyle{amsplain}
\newcommand{\sinum}{SIAM J. Numer. Anal.}
\newcommand{\sisc}{SIAM J. Sci. Comput.}

\end{document}